\algrenewcommand\alglinenumber[1]{\footnotesize \textbf{#1}}%
\def\bsuffix #1{#1}
\mathchardef\varTheta="0102
\newcommand{\eqref}[1]{(\ref{#1})}
\newcommand{\rmu}{\mathrm{u}}
\newcommand{\rmb}{\mathrm{b}}
\newcommand{\rmB}{\mathrm{B}}
\newcommand{\rmp}{\mathrm{p}}
\newtheorem{theorem}{Theorem}
\newtheorem{proposition}{Proposition}
\newtheorem{corollary}{Corollary}
\newtheorem{lemma}{Lemma}
\newcommand{\bbE}{\mathbb{E}}
\newcommand{\bbR}{\mathbb{R}}
\newcommand{\bbP}{\mathbb{P}}
\newcommand{\CF}{\mathcal{F}}
\newcommand{\rd}{\mathrm{d}}
\newcommand{\FS}{{\mathcal S}}
\newcommand{\Osc}{\operatorname{Osc}}
\newcommand{\conv}{\operatorname{conv}}
\newcommand{\Erf}{\operatorname{Erf}}
\begin{document}
\begin{frontmatter}

\title{Analysis of error propagation in particle filters with
approximation\thanksref{T1}}
\runtitle{Error analysis of particle filters with approximation}
\thankstext{T1}{Supported by the National Scientific
and Engineering Research Council of Canada (NSERC) through the
Discovery Grants program and the MITACS (Mathematics in Information
Technology and Complex Systems) Networked Centres of Excellence.}

\begin{aug}
\author[A]{\fnms{Boris N.} \snm{Oreshkin}\corref{}\ead[label=e1]{boris.oreshkin@mail.mcgill.ca}}
\and
\author[A]{\fnms{Mark J.} \snm{Coates}\ead[label=e2]{mark.coates@mcgill.ca}}
\runauthor{B. N. Oreshkin and M. J. Coates}
\affiliation{McGill University}
\address[A]{Department of Electrical\\ \quad and Computer
Engineering\\
McGill University\\
Montreal, Quebec H3A-2A7\\
Canada\\
\printead{e1}\\
\hphantom{\textsc{E-mail:}\ }\printead*{e2}}
\end{aug}

\received{\smonth{8} \syear{2009}}
\revised{\smonth{1} \syear{2011}}

%
\begin{abstract}
This paper examines the impact of approximation steps that become
necessary when particle filters are implemented on
resource-constrained platforms. We consider particle filters that
perform intermittent approximation, either by subsampling the
particles or by generating a parametric approximation. For such
algorithms, we derive time-uniform bounds on the weak-sense
$L_p$ error and present associated exponential
inequalities. We motivate the theoretical analysis by considering
the leader node particle filter and present numerical experiments
exploring its performance and the relationship to the error bounds.
\end{abstract}

%
\begin{keyword}[class=AMS]
\kwd{62L12}
\kwd{65C35}
\kwd{65L20}.
\end{keyword}
\begin{keyword}
\kwd{Collaborative tracking}
\kwd{particle filtering}
\kwd{error analysis}.
\end{keyword}

\end{frontmatter}

\setcounter{footnote}{1}

\section{Introduction}
\label{sec:Introduction}

Particle filters have proven to be an effective approach for
addressing difficult tracking problems \cite{Dou01}. Since they are
more computationally demanding and require more memory than most
other filtering algorithms, they are really only a valid choice for
challenging problems, for which other well-established techniques
perform poorly. Such problems involve dynamics and/or observation
models that are substantially nonlinear and non-Gaussian. A
particle filter maintains a set of ``particles'' that are candidate
state values of the system (e.g., the position and velocity
of an object). The filter evaluates how well individual particles
correspond to the dynamic model and set of observations, and updates
weights accordingly. The set of weighted particles provides a
pointwise approximation to the filtering distribution, which
represents the posterior probability of the state.

The analysis of approximation error propagation and stability of
nonlinear Markov filters has been an active research area for
several decades \cite{Kun71,Oco96}. In the case of the particle
filter, there has been interest in establishing what conditions must
hold for the filter to remain stable (the error remaining bounded
over time), despite the error that is introduced at every time-step
of the algorithm by the pointwise approximation of the particle
representation \cite{Leg03,Gla04,Mor01,Mor02,Cri99,DelMor04,Douc05}.

In this paper, we focus on examining the impact of additional
intermittent approximation steps which become necessary when
particle filters are implemented on resource-constrained platforms.
The approximations we consider include subsampling of the particle
representation and the generation of parametric mixture models. The
main results of the paper are time-uniform bounds on the weak-sense
$L_p$-error induced by the combination of particle sampling error
and the additional intermittent approximation error (subsampling or
parametric). We employ the Feynman--Kac semigroup analysis
methodology described in \cite{DelMor04}; our investigation of
parametric approximation is founded on error bounds for the greedy
likelihood maximization algorithm, which was developed
in \cite{Li99} and analyzed in \cite{Zha03,Rak05}.

\subsection{Leader node particle filter}

Throughout the paper, we will motivate the analysis by considering
the concrete example of the ``leader node'' particle
filter \cite{Zhao03}, an algorithm that has been proposed for
distributed tracking in sensor networks. One of the major concerns
in distributed sensor network tracking is balancing the tradeoff
between tracking performance and network lifetime. The \textit{leader
node particle filter}, proposed in \cite{Zha02,Zhao03} and refined
and analyzed in \cite{Wil07,Ihl05}, achieves significant sensing and
communication energy savings. The leader node, which performs the
particle filtering, changes over time to follow the target and
activates only a subset of nodes at any time instant. Thus only the
active sensor nodes have to relay their measurements to a nearby
location.

%
\begin{figure}[t]

\includegraphics{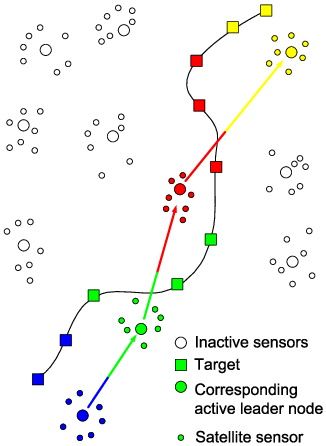}

\caption{The leader node distributed particle filtering setting.}
\label{fig:leader_node}
\end{figure}

The setting corresponding to this filtering paradigm is depicted in
Figure~\ref{fig:leader_node}. A leader node (depicted by the large
circles) is responsible for performing local tracking of the target
(trajectory depicted by squares) based on the data acquired by the
\textit{satellite} sensor nodes (depicted by small circles). The
satellite nodes have sensing capabilities and can locally transmit
the acquired data to the nearest leader node. The leader node fuses
the data gathered by the satellite nodes in its neighborhood,
incorporating them into its particle filter. Sensor management
strategies are used to determine when to change leader
node \cite{Wil07}. When this occurs, information must be exchanged
so that the new leader node can reconstruct the particle filter. In
attempting to alleviate the communication cost of transmitting all
particle values when the leader node is exchanged (which can involve
thousands of bits), the filtering distribution is more coarsely
approximated, either by transmitting only a subset of the particles
or by training a parametric model.

Mathematically, the leader node particle filter can be described as
follows. Suppose that $\mathcal{L} = \{1, 2, \ldots, L\}$ is the set
of possible leader nodes, and every leader node with label $\ell\in
\mathcal{L}$ has a set of satellite nodes $\mathcal{S}_{\ell}$ that
take measurements and transmit them to the leader node. The number
of such satellite nodes in the vicinity of the leader node $\ell$ is
$|\mathcal{S}_{\ell}|$. Denote by $\ell_t$ the label of the leader
node at time~$t$. We adopt the following state-space model to
describe the target evolution and measurement process:
%
%
\begin{eqnarray}\label{eqn:sig_proc_frame_1}
X_t &=& f_t(X_{t-1}, \varrho_t), \\
\label{eqn:sig_proc_frame_2}
Y_t^{j} &=& g_t^{j}(X_t, \zeta_t^{j}) \qquad\forall j \in
\mathcal{S}_{\ell_t} .
\end{eqnarray}
Here $X_t \in\bbR^{d_x}$ is the target state vector at time $t$,
$Y_t^j \in\bbR^{d_y^j}$ is the $j$th sensor measurement,
$\varrho_t$ and $\zeta_t^{j}$ are system excitation and measurement
noises, respectively, $f_t$ is a nonlinear system map $f_t \dvtx
{\mathbb{R}}^{d_x} \rightarrow{\mathbb{R}}^{d_x}$ and $g_t^j$ is a
nonlinear measurement map $g_t^j \dvtx {\mathbb{R}}^{d_x} \rightarrow
{\mathbb{R}}^{d_y^j}$. The target model is the same at every leader
node, but the observation process may be different.

\subsection{Feynman--Kac models}
Throughout the rest of this paper we adopt the methodology developed
in \cite{DelMor04} to analyze the behavior of filtering
distributions arising from (\ref{eqn:sig_proc_frame_1}) and
(\ref{eqn:sig_proc_frame_2}). This methodology involves representing
the particle filter as an $N$-particle approximation of a
Feynman--Kac model. We now briefly review the Feynman--Kac
representation (for a much more detailed description and discussion,
please refer to \cite{DelMor04}).

To describe the probabilistic model corresponding to the state-space
framework above, we need to introduce additional notation. Let
$(E_t,\mathcal{E}_t)$, $t\in\mathbb{N}$, be a sequence of measurable
spaces such that $X_t\in E_t$. Associated with a~measurable space of
the form $(E, \mathcal{E})$ is a set of probability measures~%
$\mathcal{P}(E)$ and the Banach space of bounded functions
$\mathcal{B}_\mathrm{b}(E)$ with supremum norm
\[
\| h \| = \sup_{x \in E} |h (x) |.
\]
We define a convex set $\Osc_1(E)$ of $\mathcal{E}$-measurable test
functions with finite oscillations
\begin{eqnarray*}
\operatorname{osc}(h) &=& \sup\bigl(| h(x) - h(y) |; x,y \in E\bigr), \\
\Osc_1(E) &=& \{ h \dvtx \operatorname{osc}(h) \leq1 \}.
\end{eqnarray*}
For any $h \in\mathcal{B}_\mathrm{b}(E)$ it is also possible to define the
following:
\[
\| h \|_{\operatorname{osc}} = \|h\| + \operatorname{osc}(h),
\]
and for a sequence of functions $(h_{i})_{1\leq i\leq N} \in
\mathcal{B}_\mathrm{b}(E)^N$ we define $\sigma^2(h)$ as
\[
\sigma^2(h) \triangleq\frac{1}{N} \sum_{k=1}^N
\operatorname{osc}^2(h_i).
\]
In order to simplify the representation, we define for a measure
$\mu\in\mathcal{P}(E)$,
\[
\mu(h) = \int_E h(x) \mu(dx)
\]
and for the Markov kernel from
$(E_{t-1}, \mathcal{E}_{t-1})$ to $(E_{t}, \mathcal{E}_t)$
\[
(\mu_{t-1} M_{t})(A_t) = \int\mu_{t-1}(\rd x_{t-1}) M_{i}(x_{t-1},
A_t).
\]

The target state vector in \eqref{eqn:sig_proc_frame_1} thus evolves
according to a nonhomogeneous discrete-time Markov chain $X_t$ with
transitions $M_{t}$ from $E_{t-1}$ into $E_{t}$. These transitions
and the initial distribution $\eta_0$ define the canonical
probability space
\[
\biggl( \Omega= \prod_{t\geq0} E_{t},
(\mathcal{F}_t)_{t\in\mathbb{N}}, (X_t)_{t\in\mathbb{N}},
\bbP_{\eta_0} \biggr),
\]
where the family of $\sigma$-algebras has the following property:
$\mathcal{F}_i \subset\mathcal{F}_j \subset\mathcal{F}_{\infty}$
for any $i \leq j$ and $\mathcal{F}_{\infty} = \sigma(\bigcup_{i\geq0}
\mathcal{F}_i)$. To characterize the properties of the observation
process in \eqref{eqn:sig_proc_frame_2} we introduce bounded and
nonnegative potential functions $G_t^{j} \dvtx E_t \rightarrow[0,
\infty), \forall j \in\FS_{\ell_t}$. Assuming that in every leader
node neighborhood $\FS_{\ell_t}$ observation noises, $\zeta_t^{j}$,
$j \in\FS_{\ell_{t}}$, in \eqref{eqn:sig_proc_frame_2} are
independent\footnote{The assumption of independence among the sensor
observations is not critical for the error analysis performed in the
paper, but is adopted because it allows for a more concrete
discussion and concise presentation of results.} the composite
potential function at leader node $\ell_t$ can be written via the
product of the individual potential functions of satellite nodes,
$G_{t}^{j}$
\[
G_{t}^{\FS_{\ell_t}} = \prod_{j \in\FS_{\ell_{t}}}
G_{t}^{j}.
\]
Then the propagation of the leader node Feynman--Kac model is
described by a pair of prediction-update operators. Since the
prediction operator is only concerned with the dynamics of the
target, it coincides with the Markov transition $M_{t}$ for all
possible leader nodes $\ell_t$ at time $t$. On the other hand, the
Boltzmann--Gibbs transformation on $\mathcal{P}(E_{t})$ is leader
node dependent and is defined for any $\eta\in\mathcal{P}(E_t)$
\[
\Psi_{t}^{\ell_{t}}(\eta)(\rd x_t) = \frac{
G_{t}^{\FS_{\ell_{t}}}(x_t) \eta(\rd x_t)}{\eta(
G_{t}^{\FS_{\ell_{t}}} )}.
\]

Using the diffusion $M_{t+1}$ and Boltzmann--Gibbs transformation
$\Psi_{t}^{\ell_{t}}$ we can identify an operator
$\Phi_{t+1}^{\ell_{t}} \dvtx \mathcal{P}(E_{t}) \rightarrow
\mathcal{P}(E_{t+1})$ which describes, for a given leader node
$\ell_t$, the evolution of the normalized prediction flow from time
$t$ to time $t+1$
\[
\Phi_{t+1}^{\ell_{t}}(\eta) = \Psi_{t}^{\ell_{t}}(\eta) M_{t+1}.
\]

To describe the evolution of the leader nodes we define a sensor
management rule $\Upsilon_{t}^{\ell_t} \dvtx \mathcal{P}(E_{t+1})
\times
\mathfrak{I}_t \rightarrow\mathcal{L} \times\{0,1\}$. This mapping
defines the next leader node, $\ell_{t+1} \in\mathcal{L}$, and the
decision, $\Delta_{t+1} \in\{0,1\}$, on whether or not the leader
node has to be exchanged: $\Delta_{t+1} = 1$ if we decide to
transfer the processing and measurement process to the leader node
other than the current one. Sensor management rules usually operate
according to the informativeness of the sensors and the predicted
position of the target.\footnote{We discuss an example of such an
algorithm, based on \cite{Wil07}, in Section~\ref{sec:experiments}.}
Thus the decision is made based on the utility of measurements
provided by different leader nodes given the current filtering
distribution $\Phi_{t+1}^{\ell_{t}}(\eta) \in\mathcal{P}(E_{t+1})$
and the information about leader nodes, $\mathcal{I}_t \in
\mathfrak{I}_t$. Information $\mathcal{I}_t$ may include coordinates
of nodes in the sensor network, measurement models for every node,
costs of performing a hand-off from the current leader node to other
leader nodes etc. The operation of the nonlinear mapping
$\Upsilon_{t}^{\ell_t}$ is described by the equation
\[
(\ell_{t+1}, \Delta_{t+1}) =
\Upsilon_{t}^{\ell_t}(\Phi_{t+1}^{\ell_{t}}(\eta), \mathcal{I}_t).
\]
The sequence of mappings $\Upsilon_{i-1}^{\ell_{i-1}}, \ldots,
\Upsilon_{t-2}^{\ell_{t-2}}$ defines the sequence\vspace*{-2pt} of leader nodes
$\ell_{i,t} = \ell_{i}, \ldots, \ell_{t-1}$ that can be used to
define the semigroups $\Phi_{i,t}^{\ell_{i,t}}$, $i\leq t$,
associated with the normalized Feynman--Kac distribution flows
\[
\Phi_{i,t}^{\ell_{i,t}} = \Phi_{t}^{\ell_{t-1}} \circ
\Phi_{t-1}^{\ell_{t-2}} \circ\cdots\circ\Phi_{i+1}^{\ell_{i}}.
\]
The semigroup $\Phi_{i,t}^{\ell_{i,t}}$ describes the evolution of
the normalized prediction Feyn\-man--Kac model from time $i$ to time
$t$ through the sequence of leader nodes $\ell_{i,t}$
\[
\eta_t^{\ell_{0,t}} = \Phi_{i,t}^{\ell_{i,t}}(\eta_i^{\ell_{0,i}}).
\]

Using the analysis tools developed in \cite{DelMor04}
$\Phi_{i,t}^{\ell_{i,t}}$ can further be related to potential
functions on $E_i$, $G_{i,t}^{\ell_{i,t}} \dvtx E_i \rightarrow(0,
\infty)$, and $P_{i,t}^{\ell_{i,t}} \dvtx \mathcal{P}(E_i) \rightarrow
\mathcal{P}(E_t)$,\vspace*{1pt} the Markov kernels from $E_i$ to $E_t$. In
particular, using the expectation with respect to the shifted chain,
\begin{eqnarray*}
&&\bbE_{i,x_i}\{ h_{i,t}(X_{i+1},\ldots,X_{t}) \}\\
&& \qquad  = \int
h_{i,t}(x_{i+1},\ldots,x_{t}) M_{i+1}(x_i, \rd x_{i+1}) \cdots
M_{t}(x_{t-1}, \rd x_{t}),
\end{eqnarray*}
and defining $G_{i,t}^{\ell_{i,t}}$ as
\[
G_{i,t}^{\ell_{i,t}}(x_i) = \bbE_{i, x_i} \biggl\{\prod
_{i\leq
j<t} G_j^{\FS_{\ell_j}}(X_j) \biggr\},
\]
we can introduce the multi-step Boltzmann--Gibbs transformation on
$E_i$ for any $\eta\in\mathcal{P}(E_i)$ and $h_i \in
\mathcal{B}_\mathrm{b}(E_i)$, $\Psi_{i,t}^{\ell_{i,t}}(\eta)(h_i) =
\eta(G_{i,t}^{\ell_{i,t}} h_i) / \eta(G_{i,t}^{\ell_{i,t}})$.
Defining~$P_{i,t}^{\ell_{i,t}}$ by the Feynman--Kac formulae,
\[
P_{i,t}^{\ell_{i,t}}(h_t) \propto\bbE_{i, x_i}\biggl \{ h_t(X_t)
\prod_{i\leq j<t} G_j^{\FS_{\ell_j}}(X_j) \biggr\},
\]
we can represent the semigroup $\Phi_{i,t}^{\ell_{i,t}}$ as follows:
\[
\Phi_{i,t}^{\ell_{i,t}}(\eta) = \Psi_{i,t}^{\ell_{i,t}}(\eta)
P_{i,t}^{\ell_{i,t}}.
\]

\subsection{Dobrushin contraction and regularity conditions}

The Dobrushin contraction coefficient ($\beta_{i,t}(P) \in[0,1]$)
plays a key role in our analysis. For a~fixed leader node sequence,
$\ell_{i,t}$, this can be defined as follows:
\[
\beta^{\ell_{i,t}}(P_{i,t}^{\ell_{i,t}}) = \sup\{\|
P_{i,t}^{\ell_{i,t}}(x_i,\cdot) - P_{i,t}^{\ell_{i,t}}(y_i,\cdot)
\|_{\mathrm{tv}}; x_i,y_i \in E_i \}.
\]
Here the total variation metric $\|\cdot\|_{\mathrm{tv}}$ is defined
for any $\mu,\eta\in\mathcal{P}(E)$ as $\|\mu(\cdot) -
\eta(\cdot)\|_{\mathrm{tv}} = \sup\{ |\mu(A) - \eta(A)| \dvtx A \in
\CF
\}$. We can also define the (worst case) Dobrushin contraction
coefficient, which is independent of the leader node sequence,
$\beta_{i,t}(P) = \sup_{\ell_{i,t}}
\beta^{\ell_{i,t}}(P_{i,t}^{\ell_{i,t}})$.

The estimation of the Dobrushin contraction coefficient is possible
if we adopt certain regularity assumptions regarding the components
of the Feynman--Kac operator. In particular, we adopt the following
condition on the Markov kernels:
\begin{itemize}
\item$(M)_{\rmu}^{(m)}$ There exists an integer $m \geq1$ and
strictly positive number $\epsilon_{\rmu}(M) \in(0,1)$ such
that for any $i \geq0$ and $x_i, y_i \in E_i$ we have
\[
M_{i,i+m}(x_i, \cdot) = M_{i+1}M_{i+2}\cdots M_{i+m}(x_i, \cdot) \geq
\epsilon_{\rmu}(M) M_{i,i+m}(y_i, \cdot).
\]
\end{itemize}
The following regularity condition is defined for the potential
functions:
\begin{itemize}
\item$(G)_{\rmu}$ There exists a strictly positive number
$\epsilon_{\rmu}(G) \in(0,1]$ such that for any $\ell_t$, $t
\geq0$ and $x_t, y_t \in E_t$
\[
G_t^{\mathcal{S}_{\ell_{t}}}(x_t) \geq\epsilon_{\rmu}^{K_{\rmu
}}(G)G_t^{\mathcal{S}_{\ell_{t}}}(y_t),
\]
\end{itemize}
$(G)_{\rmu}$ holds if a milder condition, $G_{t}^j(x_t) \geq
\epsilon_{t}(G_{t}^j)G_{t}^j(y_t)$, holds for all $t$, all potential
functions $G_{t}^{j}, j \in\mathcal{S}_{\ell_t}$, and for all
leader nodes $\ell_t \in\mathcal{L}$. In this case we can take
$\epsilon_{\rmu}(G) =
\inf_{t\geq0}\min_{\ell_{t} \in
\mathcal{L}}\min_{j \in\mathcal{S}_{\ell_t}}
\epsilon_{t}(G_{t}^j)$ and $K_{\rmu} = \max_{\ell_t
\in
\mathcal{L}} |\mathcal{S}_{\ell_t}|$.

The following two propositions that summarize results presented
in \cite{DelMor04}, Proposition~4.3.3, Corollary~4.3.3 and
Proposition~4.3.7, will be used for analyzing approximation error
propagation in the leader-node algorithm (being employed in the proofs
of Theorems~\ref{th:Lp_time_uni}, \ref{th:Exp_time_uni} and \ref
{th:Lp_param_time_uni}).
\begin{proposition}[(Proposition~4.3.3. and Corollary~4.3.3 \cite{DelMor04})]
When $(G)_{\rmu}$ and $(M)_{\rmu}^{(m)}$ are satisfied
we have for the Dobrushin contraction coefficient $\beta_{i,t}(P) =
\sup_{\ell_{i,t}} \beta^{\ell_{i,t}}(P_{i,t}^{\ell_{i,t}})$
%
%
\begin{equation} \label{eqn:dobrushin_onestep_bound}
\beta_{i,t}(P) \leq\bigl( 1 - \epsilon_{\rmu}^{2}(M)
\epsilon_{\rmu}^{(m-1)K_{\rmu}}(G) \bigr)^{\lfloor
(t-i)/m \rfloor}
\end{equation}
and the oscillations of the potential functions,
%
%
\begin{equation} \label{eqn:G_osc_bound}
 \quad \frac{\inf_{x_i \in E_i}
G_{i,t}^{\ell_{i,t}}(x_i)}{\|G_{i,t}^{\ell_{i,t}}\|} \geq
\epsilon_{\rmu}(M) \epsilon_{\rmu}^{m K_{\rmu}}(G)
, \qquad
\frac{\|G_{i,t}^{\ell_{i,t}}\|}{\nu(G_{i,t}^{\ell_{i,t}})}
\leq\epsilon_{\rmu}^{-1}(M) \epsilon_{\rmu}^{-m
K_{\rmu}}(G).
\end{equation}
\end{proposition}

\begin{proposition}[(Proposition~4.3.7 \cite{DelMor04})]
\label{prop:prop_437}
For any $0\leq p \leq n$, $\mu_p \in\mathcal{P}(E_p)$, and $f_n \in
\mathcal{B}_{\mathrm{b}}(E_n)$ with $\operatorname{osc}(f_n) \leq1$ there
exists a
function $f_{p,n}^{\mu_p}$ in $\mathcal{B}_{\mathrm{b}}(E_p)$ with
$\operatorname{osc}(f_{p,n}^{\mu_p}) \leq1$ such that for any $\eta
_p \in
\mathcal{P}(E_p)$ we have
%
%
\begin{equation}
| \Phi_{p,n}(\eta_{p}) - \Phi_{p,n}(\mu_{p}) | \leq\beta(P_{p,n})
\frac{\| G_{p,n} \|_{\operatorname{osc}}}{\eta_{p}(G_{p,n})} |
(\eta_{p} -
\mu_{p})(f_{p,n}^{\mu_p}) |.
\end{equation}
\end{proposition}

Proposition~\ref{prop:prop_437} implies that for any $\mu,\nu\in
\mathcal{P}(E_i)$ and $h_t \in\Osc_1(E_t)$ there exists $h_i \in
\Osc_1(E_i)$ such that
%
%
\begin{equation} \label{eqn:437_osc}
|[\Phi_{i,t}^{\ell_{i,t}}(\nu) - \Phi_{i,t}^{\ell_{i,t}}(\mu)](h_t)|
\leq\beta^{\ell_{i,t}}(P_{i,t}^{\ell_{i,t}})
\frac{\|G_{i,t}^{\ell_{i,t}}\|_{\operatorname{osc}}}{\nu
(G_{i,t}^{\ell_{i,t}})}
|(\nu- \mu)(h_i)|.
\end{equation}
Using the fact that we have for some positive function $\varphi$,
\[
\|\varphi\|_{\operatorname{osc}} = \|\varphi\| + \operatorname
{osc}(\varphi) \leq
\|\varphi\| \biggl( 2 - \frac{\inf_{y \in E}\varphi(y)}{\sup_{x \in
E}\varphi(x)} \biggr),
\]
and, furthermore, $\beta^{\ell_{i,t}}(P_{i,t}^{\ell_{i,t}}) \leq
\beta_{i,t}(P)$ for any $i \leq t$ and $\ell_{i,t}$, we see
%
%
\begin{eqnarray} \label{eqn:dobrushin_bound}
&&|[\Phi_{i,t}^{\ell_{i,t}}(\nu) -
\Phi_{i,t}^{\ell_{i,t}}(\mu)](h_t)|\nonumber
\\[-8pt]
\\[-8pt]
&& \qquad \leq\beta_{i,t}(P)
\frac{\|G_{i,t}^{\ell_{i,t}}\|}{\nu(G_{i,t}^{\ell_{i,t}})}\biggl [ 2
- \frac{\inf_{y_i \in
E_i}G_{i,t}^{\ell_{i,t}}(y_i)}{\|G_{i,t}^{\ell_{i,t}}\|} \biggr]
|(\nu- \mu)(h_{i})|.
\nonumber
\end{eqnarray}

Thus under assumptions $(G)_{\rmu}$ and
$(M)_{\rmu}^{(m)}$ the error propagation in the leader node
filter can be characterized as follows:
%
%
\begin{eqnarray} \label{eqn:prop_err_bound}
|[\Phi_{i,t}^{\ell_{i,t}}(\nu) - \Phi_{i,t}^{\ell_{i,t}}(\mu)](h_t)|
&\leq& \bigl( 1 - \epsilon_{\rmu}^{2}(M)
\epsilon_{\rmu}^{(m-1) K_{\rmu}}(G) \bigr)^{\lfloor
(t-i)/m \rfloor}\nonumber
\\[-8pt]
\\[-8pt]
&&{}\times\frac{2-\epsilon_{\rmu}(M)
\epsilon_{\rmu}^{m
K_{\rmu}}(G)}{\epsilon_{\rmu}(M)
\epsilon_{\rmu}^{m K_{\rmu}}(G)} |(\nu- \mu)(h_i)|.
\nonumber
\end{eqnarray}

These results describe the propagation of one-step approximation
error through the nonlinear operator $\Phi_{i,t}^{\ell_{i,t}}$.
They reveal the link between the initial error at time $i$ and the
propagated error at time $t$ through the properties of the potential
functions $G_{i,t}^{\ell_{i,t}}$ and the Dobrushin contraction
coefficient~$\beta_{i,t}(P)$.

\subsection{$N$-particle and parametric approximations}

Let the sampling operator $S^N \dvtx \mathcal{P}(E) \rightarrow
\mathcal{P}(E^N)$ be defined as
%
%
\begin{equation} \label{eqn:SN}
S^N(\eta)(h) = \frac{1}{N}\sum_{k=1}^N h(\xi_k) ,
\end{equation}
where $(\xi_1, \ldots, \xi_N)$ is the i.i.d. sample from $\eta$.
With this notation, the standard particle filter can be expressed
using the distribution update recursion, $\widehat\eta_{t+1} = S^N
(\Phi_{t+1}(\widehat\eta_{t}))$.

The operation of the leader node with additional approximations, on
the other hand, is more complex. In particular, the standard
particle filter recursion is applied if $\Delta_{t+1} = 0$ (leader
node does not change). If $\Delta_{t+1} = 1$, there is a change in
leader node, and there must be a transfer of information from the
current leader node to the next one. The communication of all $N$
particles is prohibitively costly in terms of energy. The leader
node particle filter therefore communicates a coarser approximation
of its $N$-particle representation. In this paper, we consider two
possibilities for this additional approximation step: (i) random
subsampling (choosing $N_{\rmb}$ of the particles at random);
and (ii) parametric approximation of the filtering
distribution.\vadjust{\goodbreak}

The subsampling leader node particle filter can then be expressed
as
%
%
\begin{eqnarray} \label{eqn:distr_PF_NP}
\widehat\eta_{t+1}^{\ell_{0,t+1}^{\prime}} &=& S^N \circ
S^{N_{\rmb}}(\Phi_{t+1}^{\ell_t^{\prime}}(\widehat\eta
_{t}^{\ell_{0,t}^{\prime}}))  \qquad \mbox{if } \Delta_{t+1}^{\prime
} = 1, \nonumber
\\[-8pt]
\\[-8pt]
\widehat\eta_{t+1}^{\ell_{0,t+1}^{\prime}} &=&
S^N(\Phi_{t+1}^{\ell_t^{\prime}}(\widehat\eta_{t}^{\ell
_{0,t}^{\prime}}))
 \qquad \mbox{if } \Delta_{t+1}^{\prime} = 0.
\nonumber
\end{eqnarray}
Here $\widehat\eta_{t}^{\ell_{0,t}^{\prime}}$ is the distribution
obtained via the sequence of the leader\vspace*{-2pt} nodes~$\ell_{0,t}^{\prime}$
with the convention
$\Phi_{0}^{\ell_{-1}^{\prime}}(\widehat\eta_{-1}^{\ell
_{0,-1}^{\prime}})
= \eta_{0}$ and
$\Phi_{1}^{\ell_{0}^{\prime}}(\widehat\eta_{0}^{\ell
_{0,0}^{\prime}})
= S^N(\eta_{0}) M_{1}$. In this scenario, the sensor management step
is accomplished via a suboptimal rule using the approximate
prediction of the target state
\[
(\ell_{t+1}^{\prime}, \Delta_{t+1}^{\prime}) =
\Upsilon_{t}^{\ell_t^{\prime}}(\Phi_{t+1}^{\ell_t^{\prime
}}(\widehat\eta_{t}^{\ell_{0,t}^{\prime}}),
\mathcal{I}_t).
\]
There is also an additional subsampling operation
($S^{N_{\rmb}}$) after the update of the predictive posterior
using the operator $\Phi_{t+1}^{\ell_t^\prime}$. Note that
$N_{\rmb} < N$ so that the communication cost of the leader
node exchange is reduced, since only $N_{\rmb}$ particles are
transmitted. This step is followed by communication of the
subsampled particle set to the new leader node, and finally there is
an upsampling operation to regenerate $N$ particles from the
$N_\mathrm{b}$-particle approximation.

In order to express the parametric approximation particle filter in
an analogous fashion, we introduce an operator
$\mathbb{W}_{N_{\rmp}} \dvtx \mathcal{P}(E) \rightarrow
\mathcal{P}(E^{N_{\rmp}})$, which, when applied to a measure
$\nu\in\mathcal{P}(E)$, constructs a parametric mixture
approximation comprised of $N_{\rmp}$ mixture components
%
%
\begin{equation} \label{eqn:WN}
\mathbb{W}_{N_{\rmp}}(\nu)(h) =
\sum_{k=1}^{N_{\rmp}} \alpha_k \mu_{\theta_k}(h) .
\end{equation}
Here $\mu_{\theta_k} \in\mathcal{P}(E)$ is parameterized by a set
of parameters $\theta_k$ and $\alpha_k$ are weights satisfying
$\alpha_k \geq0$ and $\sum_k \alpha_k = 1$; $\theta_k$ and
$\alpha_k$ are estimated from $\nu$. Section~\ref{sec:parametric}
provides a concrete example of $\mathbb{W}_{N_{\rmp}}$ based
on the greedy maximum likelihood maximization. The parametric
approximation particle filter can then be expressed as
%
%
\begin{eqnarray} \label{eqn:distr_PF_PA}
\widehat\eta_{t+1}^{\ell_{0,t+1}^{\prime}} &=& S^N \circ\mathbb
{W}_{N_{\rmp}} \circ S^N (\Phi_{t+1}^{\ell_t^{\prime
}}(\widehat\eta_{t}^{\ell_{0,t}^{\prime}}))  \qquad \mbox{if }
\Delta_{t+1}^{\prime} = 1, \nonumber
\\[-8pt]
\\[-8pt]
\widehat\eta_{t+1}^{\ell_{0,t+1}^{\prime}} &=&
S^N(\Phi_{t+1}^{\ell_t^{\prime}}(\widehat\eta_{t}^{\ell
_{0,t}^{\prime}}))
 \qquad \mbox{if } \Delta_{t+1}^{\prime} = 0.
\nonumber
\end{eqnarray}
Here if there is a leader node exchange ($\Delta_{t+1}^{\prime} =
1$) the output of the standard particle filter is fed into the
parametric mixture approximation operator that outputs parameters
$\theta_k$ and weights $\alpha_k$, $k = 1,\ldots, N_{\rmp}$. These
parameters and weights are further transmitted to the new leader
node to reduce the communication cost. An $N$-particle approximation
is then regenerated by sampling from the mixture with parameters
$\theta_k$ and weights $\alpha_k$.

\subsection{Problem statement}

In this paper we study the additional approximation errors arising
during the leader node exchanges. These additional approximation
errors are the result of either additional random subsampling in the
subsample approximation leader node particle filter, or the
additional parametric approximation in the parametric leader node
particle filter.

Let us denote $\eta_t^{\ell_{0,t}} =
\Phi_{0,t}^{\ell_{0,t}}(\eta_{0})$ the true leader node distribution
flow and~$\ell_{0,t}$ the associated sequence of leader nodes
obtained via the optimal sensor management rule $(\ell_{t+1},
\Delta_{t+1}) =
\Upsilon_{t}^{\ell_t}(\Phi_{t+1}^{\ell_{t}}(\eta_t^{\ell_{0,t}}),
\mathcal{I}_t)$. The approximate\vspace*{-2pt} leader node distribution flow
$\widehat\eta_t^{\ell_{0,t}^{\prime}}$ defined by either
\eqref{eqn:distr_PF_NP} or \eqref{eqn:distr_PF_PA} uses the sequence
of leader nodes obtained via the suboptimal sensor management rule,\vspace*{-2pt}
$(\ell_{t+1}^{\prime},\allowbreak \Delta_{t+1}^{\prime}) =
\Upsilon_{t}^{\ell_t^{\prime}}(\Phi_{t+1}^{\ell_{t}^{\prime
}}(\widehat\eta_t^{\ell_{0,t}^{\prime}}),
\mathcal{I}_t)$.

The global error between the true filtering distribution,
$\eta_t^{\ell_{0,t}}$, and $\widehat\eta_t^{\ell_{0,t}^\prime}$ can
be split into two components:
\begin{eqnarray*}
\bbE\{|[\eta_t^{\ell_{0,t}} -
\widehat\eta_t^{\ell_{0,t}^\prime}](h_t)|^p\}^{1/p} &\leq&
\bbE\{|[\eta_t^{\ell_{0,t}^\prime} -
\widehat\eta_t^{\ell_{0,t}^\prime}](h_t)|^p\}^{1/p}\\
&&{} +
\bbE\{|[\eta_t^{\ell_{0,t}} -
\eta_t^{\ell_{0,t}^\prime}](h_t)|^p\}^{1/p}.
\end{eqnarray*}
Here the first term represents the error accumulated in the leader
node recursion because of the additional distribution approximations
during the leader node exchanges, and the second term represents the
errors arising due to the sub-optimality of the sensor management
rule. In this paper we study the errors of the first kind.

The global error of the first kind,
$\widehat\eta_t^{\ell_{0,t}^\prime} - \eta_t^{\ell_{0,t}^\prime}$,
can be related to the sequence of local approximation errors
$\widehat\eta_i^{\ell_{0,i}^\prime} -
\Phi_{i}^{\ell_{i-1}^\prime}(\widehat\eta_{i-1}^{\ell
_{0,i-1}^\prime}),
i = 0,\ldots,t$ \cite{DelMor04}
%
%
\begin{equation} \label{eqn:err_decomp}
\widehat\eta_t^{\ell_{0,t}^\prime} - \eta_t^{\ell_{0,t}^\prime} =
\sum_{i=0}^t [
\Phi_{i,t}^{\ell_{i,t}^\prime}(\widehat\eta_i^{\ell_{0,i}^\prime
}) -
\Phi_{i,t}^{\ell_{i,t}^\prime}(\Phi_{i}^{\ell_{i-1}^\prime
}(\widehat\eta_{i-1}^{\ell_{0,i-1}^\prime}))
].
\end{equation}
To simplify the notation in the rest of the article, we will use the
following convention, suppressing the explicit identification of the
leader-node\vspace*{-1pt} sequences; we will write $\eta_t^{\prime} \equiv
\eta_t^{\ell_{0,t}^\prime}$ and $\eta_t \equiv
\eta_t^{\ell_{0,t}}$, with associated mappings\vspace*{-1pt} $\Phi_t \equiv\Phi
_{t}^{\ell_{t-1}}$
and $\Phi_{t}^{\prime} \equiv\Phi_{t}^{\ell_{t-1}^\prime}$. Similarly
we express the particle approximations as $\widehat\eta_t^{\prime}
\equiv
\widehat\eta_t^{\ell_{0,t}^\prime}$ and $\widehat\eta_t \equiv
\widehat\eta_t^{\ell_{0,t}}$.

\subsection{Paper organization}

The rest of the paper is organized as follows.
Section~\ref{sec:foundational} presents some foundational results
that serve as the basis for our analysis. In
Section~\ref{sec:bootstrap} we present error bounds and exponential
inequalities for the leader node particle filter that performs
intermittent subsampling, and in Section~\ref{sec:parametric} we
analyze the performance of this filter when it employs parametric
approximation. Section~\ref{sec:experiments} describes numerical
experiments that illustrate the performance of the algorithms we
analyze and the relationship to the bounds.
Section~\ref{sec:related} discusses related work, and
Section~\ref{section::Concluding Remarks} summarizes the
contribution and makes concluding remarks.

\section{Bounds on errors induced by sampling}
\label{sec:foundational}

The following result bounds the weak-sense $L_p$ error induced by
the sampling operator for functions with finite oscillations. It is
used to characterize the one-step approximation errors in the leader
node particle filter.

\begin{lemma} \label{lem:2}
Suppose $\nu\in\mathcal{P}(E)$, then for any $p \geq1$ and an
$\mathcal{E}$-measurable function $h$ with finite oscillations we
have
\[
\mathbb{E}\{ |[\nu- S^N(\nu)](h)|^p \}^{ {1}/{p}} \leq
c(p)^{ {1}/{p}} \frac{\sigma(h)}{\sqrt{N}},
\]
where $c(p)$ is defined as follows:
\[
c(p) =
\cases{\displaystyle 1 ,&\quad if $p = 1 $,\cr\displaystyle
2^{-p/2} p \Gamma[p/2] ,&\quad if $p > 1$,
}
\]
and $\Gamma[\cdot]$ is the Gamma function.
\end{lemma}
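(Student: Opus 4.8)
The plan is to reduce the statement to a moment bound for a normalised sum of independent, centred, bounded random variables, and then to extract that bound from a sub-Gaussian tail estimate. Writing $\xi^1,\dots,\xi^N$ for the i.i.d. sample from $P$, we have
\[
[P - S^N(P)](h) = \frac{1}{N}\sum_{k=1}^N\bigl(P(h) - h(\xi^k)\bigr) =: \bar Y,
\]
a normalised sum of i.i.d. centred variables $\Delta_k = P(h) - h(\xi^k)$, each taking values in an interval of length at most $\osc(h)$. Since the sample is identically distributed and $h$ is a single function, $\sigma(h)=\osc(h)$ in this setting, so it suffices to prove $\mathbb{E}|\bar Y|^p \le c(p)\,\osc^p(h)/N^{p/2}$.

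For $p>1$ I would run a Chernoff argument. By Hoeffding's lemma each centred term satisfies $\mathbb{E}\,\exp(\lambda\Delta_k)\le \exp(\lambda^2\osc^2(h)/8)$; multiplying over the independent terms and optimising in $\lambda$ yields the two-sided bound
\[
\mathbb{P}\bigl(|\bar Y| > t\bigr) \le 2\exp\!\left(-\frac{2N t^2}{\osc^2(h)}\right).
\]
The moment is then recovered by integrating the tail, $\mathbb{E}|\bar Y|^p = \int_0^\infty p\,t^{p-1}\,\mathbb{P}(|\bar Y|>t)\,dt$, and the substitution $u = 2N t^2/\osc^2(h)$ converts the right-hand side into a Gamma integral. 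Collecting the powers of $2$ and $N$ produces exactly $\mathbb{E}|\bar Y|^p \le 2^{-p/2}p\,\Gamma[p/2]\,\osc^p(h)/N^{p/2}$, i.e. the stated bound with $c(p)=2^{-p/2}p\,\Gamma[p/2]$, and taking $p$-th roots gives the claim.

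The case $p=1$ has to be treated separately, because the tail-integration constant evaluates to $\sqrt{\pi/2}>1$ at $p=1$ rather than the advertised $c(1)=1$. Here I would instead use Jensen's inequality, $\mathbb{E}|\bar Y| \le (\mathbb{E}\bar Y^2)^{1/2} = (N^{-1}\mathrm{Var}(h(\xi^1)))^{1/2}$, together with the crude bound $\mathrm{Var}(h(\xi^1)) \le \osc^2(h)$, which gives $\mathbb{E}|\bar Y| \le \osc(h)/\sqrt N$ and hence $c(1)=1$. The main obstacle is obtaining the \emph{exact} constant rather than merely the $O(\osc(h)/\sqrt N)$ rate: the factor $2^{-p/2}$ is delicate and arises precisely because Hoeffding's lemma furnishes the sharp sub-Gaussian variance proxy $\osc^2(h)/(4N)$ for $\bar Y$. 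A more routine route — symmetrising with the paper's Rademacher sequence $(\varepsilon_k)$ and then invoking Khintchine's inequality conditionally — would also give the correct rate, but the standard ghost-sample symmetrisation replaces the proxy $\osc^2(h)/4$ by $\osc^2(h)$ and thereby inflates the constant; so if that machinery is used instead, some care is needed to recover the stated $c(p)$.
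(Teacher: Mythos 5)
Your proposal is correct and follows essentially the same route as the paper: the two-sided Chernoff--Hoeffding tail bound for the centred normalised sum, followed by integration of the tail to produce the Gamma integral and the constant $2^{-p/2}p\,\Gamma[p/2]$. The only cosmetic difference is at $p=1$, where the paper invokes Lemma 7.3.3 of Del Moral to replace $\sqrt{\pi/2}$ by $1$, while you obtain the same improvement directly via Jensen's inequality and the variance bound $\mathrm{Var}(h(\xi^1))\leq\osc^2(h)$ --- both are valid.
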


\begin{pf}
Since $\mathbb{E}\{[\nu- S^N(\nu)](h)\} = \nu(h) - \nu(h) = 0$, we
have, from the Chernov--Hoeffding inequality,
\[
\mathbb{P}\{ |[\nu- S^N(\nu)](h)| \geq\epsilon\} \leq2
e^{- {2 N \epsilon^2}/({\sigma^2(h)})}.
\]
We note that
\[
\mathbb{P}\{ |[\nu- S^N(\nu)](h)|^p \geq\epsilon\} = \mathbb{P}\{
|[\nu- S^N(\nu)](h)| \geq\epsilon^{1/p} \},
\]
and we have, from the Chernov--Hoeffding inequality,
\[
\mathbb{P}\{ |[\nu- S^N(\nu)](h)| \geq\epsilon^{1/p} \} \leq2
e^{-2 N \epsilon^{2/p}/\sigma^2(h)}.
\]
Next we recall the following property:
\[
\mathbb{E}\{ |[\nu- S^N(\nu)](h)| \} = \int_0^\infty\mathbb{P}\{
|[\nu- S^N(\nu)](h)| \geq\epsilon\} \,\rd\epsilon.
\]
And finally we obtain
\begin{eqnarray*}
\mathbb{E}\{ |[\nu- S^N(\nu)](h)|^p \}^{ {1}/{p}} &=& \biggl[
\int_0^\infty\mathbb{P}\{ |[\nu- S^N(\nu)](h)| \geq\epsilon^{1/p}
\} \,\rd\epsilon
\biggr]^{ {1}/{p}} \\
&\leq& \biggl[ 2 \int_0^\infty e^{-2
N \epsilon^{2/p}/\sigma^2(h)} \,\rd\epsilon
\biggr]^{ {1}/{p}} \\
&=& \biggl[ \sigma^p(h) p (2N)^{- {p}/{2}}
\Gamma\biggl[\frac{p}{2} \biggr] \biggr]^{ {1}/{p}}.
\end{eqnarray*}
Applying Lemma 7.3.3 of \cite{DelMor04} allows us to set $c(1) = 1$
instead of $c(1) = 2^{-1/2} \Gamma[1/2] = \sqrt{\pi/2}$, and this
completes the
proof.
\end{pf}

Lemma~\ref{lem:2} tightens Lemma 7.3.3 from \cite{DelMor04} and
extends it to include noninteger~$p$. It is relatively
straightforward to see why the sequence of constants~$c(p)$ provides
improvement over Lemma 7.3.3 from \cite{DelMor04} that uses the
sequence of constants $d(p)$. For example, for even $p=2n$,
$d(2n) = (2n)!/n! 2^{-n}$ and the ratio of the two sequences is
%
%
\begin{equation} \label{eqn:cp_dp}
 \qquad \frac{d(2n)}{c(2n)} = \frac{(2n)!2^{-n}}{n!(2n)\Gamma(n)2^{-n}} =
\frac{(2n-1)!}{n(n-1)!\Gamma(n)} =
\frac{\Gamma(2n)}{n\Gamma(n)\Gamma(n)} = \frac{1}{n \rmB(n,n)}.
\end{equation}
Here $\rmB$ is the Beta function. $\rmB(n,n)$ is a quickly
decaying function. For large~$n$, Stirling's approximation gives a
simple expression for the Beta function, $\rmB(n,n) \sim
\sqrt{2\pi}n^{-1/2}2^{-2n+1/2}$, yielding the large $n$ Stirling's
approximation for (\ref{eqn:cp_dp}),
\[
\frac{d(2n)}{c(2n)} \sim\frac{1}{\sqrt{2\pi n}} 2^{2n-1/2}.
\]
This shows that $c(p)$ grows much slower with $p$ than $d(p)$.

The following theorem provides a bound on the moment-generating
function of the empirical process $\sqrt{N}[\nu- S^N(\nu)](h)$. The
result employs Lemma~\ref{lem:2} to tighten Theorem~7.3.1
of \cite{DelMor04}.
\begin{theorem} \label{th:3}
For any $\mathcal{E}$-measurable function $h$ such that $\sigma(h) <
\infty$, we have for any $\varepsilon$
\[
\mathbb{E} \bigl\{ e^{\varepsilon\sqrt{N}|[\nu- S^N(\nu)](h)|}
\bigr\} \leq1 + \varepsilon\sigma(h) \Biggl(1-\sqrt{\frac{\pi}{2}}
+ \sqrt{\frac{\pi}{2}}e^{ ({\varepsilon^2}/{8})\sigma^2(h)}
\biggl[ 1
+ \Erf\biggl[ \frac{\varepsilon\sigma(h)}{\sqrt{8}} \biggr]
\biggr]
\Biggr).
\]
\end{theorem}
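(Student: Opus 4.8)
The plan is to bound the moment-generating function by a term-by-term power-series expansion driven by the moment bound of Lemma~\ref{lem:mX}, and then to recognise the resulting infinite series as a closed-form expression involving the error function. Write $Y=\sqrt{N}\,|m(X)(h)|\geq 0$ and work in the regime $\varepsilon\geq 0$ (the case of interest for the exponential inequalities that follow). Expanding the exponential and interchanging expectation and summation — legitimate by Tonelli's theorem, since every summand is non-negative — gives
\begin{align*}
\mathbb{E}\{e^{\varepsilon Y}\}=\sum_{p=0}^{\infty}\frac{\varepsilon^{p}}{p!}\,\mathbb{E}\{Y^{p}\}.
\end{align*}
Lemma~\ref{lem:mX} yields $\mathbb{E}\{|m(X)(h)|^{p}\}\leq c(p)\,\sigma^{p}(h)/N^{p/2}$, hence $\mathbb{E}\{Y^{p}\}=N^{p/2}\,\mathbb{E}\{|m(X)(h)|^{p}\}\leq c(p)\,\sigma^{p}(h)$, and therefore
\begin{align*}
\mathbb{E}\{e^{\varepsilon Y}\}\leq 1+\sum_{p=1}^{\infty}\frac{(\varepsilon\sigma(h))^{p}}{p!}\,c(p).
\end{align*}

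Next I would isolate the $p=1$ term, for which Lemma~\ref{lem:2} supplies the tightened constant $c(1)=1$ rather than the value $2^{-1/2}\Gamma[1/2]=\sqrt{\pi/2}$ that the general formula assigns. Substituting $c(p)=2^{-p/2}p\,\Gamma[p/2]$ for $p\geq 2$, using $p/p!=1/(p-1)!$, and adding and subtracting the value the general formula would give at $p=1$ produces the correction term $\varepsilon\sigma(h)\,(1-\sqrt{\pi/2})$ together with the full series
\begin{align*}
\tilde S=\sum_{p=1}^{\infty}\frac{(\varepsilon\sigma(h))^{p}}{(p-1)!}\,2^{-p/2}\,\Gamma\!\left[\frac{p}{2}\right].
\end{align*}

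The main work is evaluating $\tilde S$ in closed form. Setting $z=\varepsilon\sigma(h)/\sqrt{2}$ and re-indexing $q=p-1$ reduces $\tilde S$ to $z\sum_{q\geq 0}\frac{z^{q}}{q!}\,\Gamma[(q+1)/2]$. I would then insert the integral representation $\Gamma[(q+1)/2]=2\int_{0}^{\infty}u^{q}e^{-u^{2}}\,\rd u$, interchange sum and integral (again by non-negativity), and sum the exponential series inside to obtain $\tilde S=2z\int_{0}^{\infty}e^{-u^{2}+zu}\,\rd u$. Completing the square $-u^{2}+zu=-(u-z/2)^{2}+z^{2}/4$ and shifting the variable turns this into $z\,e^{z^{2}/4}$ times a Gaussian integral over $[-z/2,\infty)$, which splits as $\tfrac{\sqrt{\pi}}{2}\bigl(1+\Erf[z/2]\bigr)$. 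Re-expressing $z^{2}/4=\varepsilon^{2}\sigma^{2}(h)/8$, $z/2=\varepsilon\sigma(h)/\sqrt{8}$, and $z\sqrt{\pi}=\varepsilon\sigma(h)\sqrt{\pi/2}$ gives exactly
\begin{align*}
\tilde S=\varepsilon\sigma(h)\sqrt{\tfrac{\pi}{2}}\,e^{\varepsilon^{2}\sigma^{2}(h)/8}\Bigl[1+\Erf\bigl[\varepsilon\sigma(h)/\sqrt{8}\bigr]\Bigr],
\end{align*}
and combining this with the correction term $\varepsilon\sigma(h)(1-\sqrt{\pi/2})$ and the leading $1$ reproduces the stated bound.

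The step I expect to be the main obstacle is the closed-form summation of $\tilde S$: the decisive trick is rewriting the half-integer Gamma values through the substitution $t=u^{2}$, so that the Gamma-weighted exponential series collapses to a single shifted Gaussian integral whose completion of the square manufactures the error function. A secondary point requiring care is the bookkeeping that converts the tightened constant $c(1)=1$ into the additive correction $(1-\sqrt{\pi/2})$; one must be sure the general-formula $p=1$ contribution that is folded into $\tilde S$ is precisely $\varepsilon\sigma(h)\sqrt{\pi/2}$ so that the accounting is exact rather than merely an inequality.
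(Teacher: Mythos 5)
Your proposal is correct and follows essentially the same route as the paper's proof: expand the moment-generating function as a power series, bound the moments via Lemma~\ref{lem:mX}, isolate the $p=1$ term with the tightened constant $c(1)=1$, and resum the remaining Gamma-weighted series into the $\Erf$ expression. The only difference is cosmetic --- you work with $Y=\sqrt{N}\,|m(X)(h)|$ from the start and explicitly derive the closed form of the series (which the paper merely asserts), whereas the paper carries the factor $1/\sqrt{N}$ through and substitutes $\varepsilon\to\varepsilon\sqrt{N}$ at the end.
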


\begin{pf}
We first utilize the power series representation of the exponential
\[
\mathbb{E} \bigl\{ e^{\varepsilon|[\nu- S^N(\nu)](h)|} \bigr\} =
1 + \varepsilon\mathbb{E} \{|[\nu- S^N(\nu)](h)| \} +
\sum_{n \geq2} \frac{\varepsilon^{n}}{n!}
\mathbb{E} \{|[\nu- S^N(\nu)](h)|^{n} \}.
\]
Utilizing Lemma~\ref{lem:2} we have
\begin{eqnarray*}
&&\mathbb{E} \bigl\{ e^{\varepsilon|[\nu- S^N(\nu)](h)|} \bigr\}
\\
&& \qquad\leq1 + \frac{\varepsilon\sigma(h)}{\sqrt{N}} + \sum_{n
\geq2} \biggl[\frac{\varepsilon\sigma(h)}{(2N)^{1/2}} \biggr]^{n}
\frac{\Gamma[ n/2 ]}{(n-1)!} \\
&& \qquad= 1 + \frac{\varepsilon\sigma(h)}{\sqrt{N}} -
\frac{\varepsilon\sigma(h) \sqrt{\pi}}{\sqrt{2N}} +
\frac{\varepsilon\sigma(h)
\sqrt{\pi}}{\sqrt{2N}}e^{ {\varepsilon^2\sigma
^2(h)}/({8N})} \biggl[
1 + \Erf\biggl[ \frac{\varepsilon\sigma(h)}{\sqrt{8N}} \biggr]
\biggr].
\end{eqnarray*}
Choosing $\varepsilon= \varepsilon\sqrt{N}$ and rearranging terms
completes the proof.\vadjust{\goodbreak}
\end{pf}

The following corollary, containing a more tractable variation of the
previous theorem, can be useful for deriving the exponential
inequalities for the particle approximations of Feynman--Kac models.
\begin{corollary} \label{cor:simple_MGF_bound}
For any $\mathcal{E}$-measurable function $h$ such that $\sigma(h)
< \infty$, we have for any $\varepsilon$
\[
\mathbb{E} \bigl\{ e^{\varepsilon\sqrt{N}|[\nu- S^N(\nu)](h)|}
\bigr\} \leq\bigl(1 + \sqrt{2\pi} \varepsilon\sigma(h) \bigr)
e^{ ({\varepsilon^2}/{8})\sigma^2(h)}.
\]
\end{corollary}

\begin{pf}
The proof is straightforward since $\sup_{x}\Erf(x) = 1$,
$1-\sqrt{\pi/2} < 0$ and $e^{ ({\varepsilon^2}/{8})\sigma^2(h)}
\geq1$.
\end{pf}

We note that the simplified estimate of the moment-generating function in
Corollary~\ref{cor:simple_MGF_bound} is much tighter than the bound in
Theorem~7.3.1 of \cite{DelMor04} for asymptotically large deviations
$\varepsilon$ while the more complex bound in Theorem~\ref{th:3}
is uniformly tighter over the range of $\varepsilon$.

\section{Particle filters with intermittent subsampling}
\label{sec:bootstrap}

This section presents an analysis of the error propagation in the
leader node particle filter that performs intermittent subsampling
approximation steps. We focus on the case where the number of
particles $N$ is constant, and the subsampling approximation step
always uses $N_\mathrm{b}$ particles. Our main results are a time-uniform
bound on the weak-sense $L_p$-error and an associated exponential
inequality.

\subsection{Time-uniform error bounds and exponential inequalities}
\label{ssec:bootstrap_time_uni}

We now analyze the global approximation error for the leader node
particle filter with intermittent subsampling defined by
recursion~\eqref{eqn:distr_PF_NP}. We first present a~theorem that
specifies a time-uniform bound on the weak-sense $L_p$ error.
\begin{theorem} \label{th:Lp_time_uni}
Suppose $\widehat\eta_t^{\prime}$ is defined
by~\eqref{eqn:distr_PF_NP} and assumptions $(G)_{\rmu}$\break and~%
$(M)_{\rmu}^{(m)}$ hold. Suppose further that $\mathbb{P}\{
\Delta^{\prime}_{i} = 1\} \leq q_{\rmu}$ for any $i \geq0$
and $0\leq q_{\rmu} \leq2/3$. Then for a positive integer
$\chi$ such that $N = \chi N_{\rmb}$, $t \geq0$, $p \geq1$
and $h_t \in\Osc_1(E_t)$ we have the time-uniform estimate
\[
\sup_{t \geq0}\mathbb{E} \{ |[\widehat\eta_t^{\prime
} -
\eta_t^{\prime}](h_t)|^p \}^{1/p} \leq
\frac{\epsilon_{\rmu,m} c^{1/p}(p) }{\sqrt{N}} \bigl(
q_{\rmu}^{1/p}\sqrt{\chi} + (1-q_{\rmu})^{1/p} \bigr),
\]
where the constant $\epsilon_{\rmu,m}$ is
%
%
\begin{equation}\label{eq:epsdef}
\epsilon_{\rmu,m} = m\bigl(2 - \epsilon_{\rmu}(M) \epsilon^{m
K_{\rmu}}_{\rmu}(G)\bigr) / \epsilon^3_{\rmu}(M)
\epsilon^{(2m-1)K_{\rmu}}_{\rmu}(G).
\end{equation}
\end{theorem}

\begin{pf}
This and other technical proofs can be found in
Section~\ref{sec:proofs}.
\end{pf}

The result can be generalized to cases where $N$ is not an integer
multiple of $N_\mathrm{b}$, at the expense of a slight loosening of the bound.

\begin{corollary} \label{cor:Lp_time_uni} Suppose the assumptions
of Theorem~\ref{th:Lp_time_uni} apply, except we allow any integer
${N_{\rmb}} < N$. Then for any $t \geq0$, $p \geq1$
and $h_t \in\Osc_1(E_t)$ we have the time-uniform estimate
\begin{eqnarray*}
&&\sup_{t \geq0}\mathbb{E} \{ |[\widehat\eta_t^{\prime
} -
\eta_t^{\prime}](h_t)|^p \}^{1/p}\\
&& \qquad  \leq\epsilon_{\rmu,m}
c^{1/p}(p) \biggl( q_{\rmu}^{1/p} \biggl[\frac{1}{\sqrt{N}} +
\frac{1}{\sqrt{{N_{\rmb}}}} \biggr] + (1-q_{\rmu})^{1/p}
\frac{1}{\sqrt{N}} \biggr).
\end{eqnarray*}
\end{corollary}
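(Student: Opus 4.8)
The plan is to mirror the proof of Theorem~\ref{th:Lp_time_uni} essentially verbatim up through the three-term local decomposition (\ref{eqn:ith_term_error_decomp}), and then to supply a bound for the first term, which vanished under the integer-multiple hypothesis $N = \chi N_{\textrm{b}}$ but is generically nonzero for arbitrary $N_{\textrm{b}} < N$. Concretely, I would retain the global error decomposition (\ref{eqn:err_decomp}), Minkowski's inequality, Proposition~\ref{pro:G_osc}, and the propagated-error estimate (\ref{eqn:prop_err_bound}), so that the problem again reduces to controlling the per-step local error $\mathbb{E}\{|[\eta_i^{N} - \Phi_{\ell_i}(\eta_{i-1}^{N})](h_i)|^p\}^{1/p}$, which splits into the three terms of (\ref{eqn:ith_term_error_decomp}). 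The second and third terms are bounded exactly as in the theorem, giving contributions $c^{1/p}(p)\,q_i^{1/p}/\sqrt{N_{\textrm{b}}}$ and $c^{1/p}(p)\,(1-q_i)^{1/p}/\sqrt{N}$ respectively.

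The new ingredient is the first term, $\mathbb{E}\{|\delta_i[S^N \circ S^{N_{\textrm{b}}}(\mu) - S^{N_{\textrm{b}}}(\mu)](h_i)|^p\}^{1/p}$ with $\mu = \Phi_{\ell_i}(\eta_{i-1}^{N})$. Here $S^N \circ S^{N_{\textrm{b}}}(\mu)$ is the empirical measure of $N$ i.i.d.\ draws from the (random) $N_{\textrm{b}}$-atom measure $P := S^{N_{\textrm{b}}}(\mu)$. I would condition on the $\sigma$-algebra $\mathcal{H}_i$ generated by $\mathcal{F}_{i-1}$, the measurement $Y_i^{\mathcal{S}_{\ell_i}}$, and the $N_{\textrm{b}}$-subsample. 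Given $\mathcal{H}_i$, the decision $\delta_i$ is measurable (it depends only on $\{\xi^j_{i-1}\}$ and $Y_i$), and $P$ is a fixed probability measure, so the upsampling step is precisely i.i.d.\ sampling from $P$. Applying Lemma~\ref{lem:2} conditionally and using $\osc(h_i) \leq 1$ (hence $\sigma(h_i) \leq 1$) yields the conditional bound $c^{1/p}(p)/\sqrt{N}$; pulling out the $\mathcal{H}_i$-measurable factor $\delta_i$ and using $\mathbb{E}\{\delta_i\} = q_i$ gives $c^{1/p}(p)\,q_i^{1/p}/\sqrt{N}$. This is structurally identical to the conditioning argument (\ref{eqn:conditioning_argument}) already used for the second term.

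Combining the three contributions then produces
$$\mathbb{E}\{|[\eta_i^{N} - \Phi_{\ell_i}(\eta_{i-1}^{N})](h_i)|^p\}^{1/p} \leq c^{1/p}(p)\left(q_i^{1/p}\left[\frac{1}{\sqrt{N}} + \frac{1}{\sqrt{N_{\textrm{b}}}}\right] + (1-q_i)^{1/p}\frac{1}{\sqrt{N}}\right),$$
which is again of the form $\varphi(q_i) = q_i^{1/p}(\alpha+\beta) + (1-q_i)^{1/p}\alpha$ with $\alpha = 1/\sqrt{N}$ and $\beta = 1/\sqrt{N_{\textrm{b}}}$; crucially $\beta > \alpha > 0$ precisely because $N_{\textrm{b}} < N$. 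The maximization argument of the theorem therefore carries over without change: $q_{\max} \geq 2/3$, so the hypothesis $q_{\textrm{u}} \leq 2/3$ permits replacing $q_i$ by $q_{\textrm{u}}$ uniformly in $i$. Summing over $i$ with the geometric bound (\ref{eqn:const_sum_bound}) together with the prefactor from (\ref{eqn:prop_err_bound}) reassembles the constant $\epsilon_{\textrm{u},m}$ of (\ref{eq:epsdef}) and gives the claimed estimate. Since the extra $q_{\textrm{u}}^{1/p}/\sqrt{N}$ contribution is dominated by the $q_{\textrm{u}}^{1/p}/\sqrt{N_{\textrm{b}}}$ term, this is exactly the ``slight loosening'' advertised in the statement.

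The only real subtlety, and hence the step I would flag as the main obstacle, is the conditioning for the first term: one must verify both that $\delta_i$ is $\mathcal{H}_i$-measurable and that, conditionally on $\mathcal{H}_i$, the $N$-resampling is genuinely i.i.d.\ from the \emph{fixed} $N_{\textrm{b}}$-atom measure $P$, so that Lemma~\ref{lem:2} — whose hypotheses require a nonrandom source measure and the zero-mean centering $(P - S^N(P))(h)$ — is legitimately applicable. Everything else is a faithful repetition of the theorem's argument.
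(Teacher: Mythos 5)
Your proposal is correct and takes essentially the same route as the paper: the paper's proof of this corollary consists precisely of noting that the first term in (\ref{eqn:ith_term_error_decomp}) now contributes $c^{1/p}(p)\,q_i^{1/p}/\sqrt{N}$ rather than vanishing, and then rerunning the proof of Theorem~\ref{th:Lp_time_uni} with this extra contribution. Your conditioning on the $N_{\textrm{b}}$-atom subsample to justify applying Lemma~\ref{lem:2} to the upsampling step is exactly the detail the paper leaves implicit, and your observation that $\beta=1/\sqrt{N_{\textrm{b}}}>1/\sqrt{N}=\alpha$ holds automatically keeps the $q_{\max}\geq 2/3$ argument intact.
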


\begin{corollary} \label{cor:Lp_time_uni_tighter} Under the same
assumptions as Theorem~\ref{th:Lp_time_uni}, we have for $p \in
\mathbb{N}$ and
$h_t \in\Osc_1(E_t)$ the time-uniform estimate
%
%
\begin{equation} \label{eqn:corLp}
\sup_{t \geq0}\mathbb{E} \{ |[\widehat\eta_t^{\prime
} -
\eta_t^{\prime}](h_t)|^p \}^{1/p} \leq
\frac{\epsilon_{\rmu,m} c^{1/p}(p) }{\sqrt{N}} \bigl(
q_{\rmu} \chi^{p/2} + (1-q_{\rmu}) \bigr)^{1/p}.
\end{equation}
\end{corollary}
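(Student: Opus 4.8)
The plan is to follow the proof of Theorem~\ref{th:Lp_time_uni} step for step, diverging only at the point where the local one-step error must be bounded, and to replace the Minkowski splitting of the $\delta_i=1$ and $\delta_i=0$ branches by an exact decomposition that exploits the binary nature of $\delta_i$. Starting from the global error decomposition (\ref{eqn:err_decomp}), I would apply Minkowski's inequality, Proposition~\ref{pro:G_osc}, and the contraction estimate (\ref{eqn:prop_err_bound}) exactly as in Theorem~\ref{th:Lp_time_uni}, so that the argument reduces once more to controlling the local errors $\mathbb{E}\{|[\eta_i^{N}-\Phi_{\ell_{i}}(\eta_{i-1}^{N})](h_i)|^p\}^{1/p}$ weighted by a geometrically decaying factor.

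The single new idea enters at the local-error step. Instead of applying Minkowski to (\ref{eqn:expectation_explicit}) --- which is what introduces the sub-additive loss responsible for the $q_i^{1/p}$ factor in Theorem~\ref{th:Lp_time_uni} --- I would note that the two algorithmic branches are mutually exclusive because $\delta_i\in\{0,1\}$. Writing $A_i = [S^{N}\circ S^{N_{\textrm{b}}}(\Phi_{\ell_{i}}(\eta_{i-1}^{N}))-\Phi_{\ell_{i}}(\eta_{i-1}^{N})](h_i)$ and $B_i = [S^{N}(\Phi_{\ell_{i}}(\eta_{i-1}^{N}))-\Phi_{\ell_{i}}(\eta_{i-1}^{N})](h_i)$, the identities $\delta_i(1-\delta_i)=0$, $\delta_i^p=\delta_i$ and $(1-\delta_i)^p=1-\delta_i$ give the exact equality
\begin{align*}
\left|[\eta_i^{N}-\Phi_{\ell_{i}}(\eta_{i-1}^{N})](h_i)\right|^p = \delta_i |A_i|^p + (1-\delta_i)|B_i|^p,
\end{align*}
with no surviving cross terms. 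As in Theorem~\ref{th:Lp_time_uni}, the exact replication afforded by $N=\chi N_{\textrm{b}}$ collapses $A_i$ to the pure $S^{N_{\textrm{b}}}$-sampling error.

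I would then take the full expectation of both terms, condition on $\mathcal{F}_{i-1}$ and the realized measurement $Y_{i}^{\mathcal{S}_{\ell_{i}}}$, and invoke the Markovian independence of the decision rule exactly as in (\ref{eqn:conditioning_argument}), applying Lemma~\ref{lem:2} to each conditional $p$-th moment. Since $\sigma(h_i)\leq 1$ for $h_i\in\Osc_1(E_i)$ and $N_{\textrm{b}}^{-p/2}=\chi^{p/2}N^{-p/2}$, this yields
\begin{align*}
\mathbb{E}\left\{\left|[\eta_i^{N}-\Phi_{\ell_{i}}(\eta_{i-1}^{N})](h_i)\right|^p\right\} \leq \frac{c(p)}{N^{p/2}}\left(q_i\chi^{p/2}+(1-q_i)\right).
\end{align*}
The crucial gain over Theorem~\ref{th:Lp_time_uni} is that the $1/p$-th power is now taken only \emph{after} the two expectations have been summed, producing weights $q_i$ and $1-q_i$ rather than $q_i^{1/p}$ and $(1-q_i)^{1/p}$.

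It remains to make the bound time-uniform and assemble the prefactor. Because $\chi^{p/2}\geq 1$, the map $q\mapsto q\chi^{p/2}+(1-q)$ is non-decreasing, so $q_i$ may be replaced by $q_{\textrm{u}}$ --- a trivial monotonicity that supplants the delicate $q_{\textrm{max}}\geq 2/3$ computation of Theorem~\ref{th:Lp_time_uni}. Substituting the resulting uniform local bound into the geometric sum and invoking (\ref{eqn:const_sum_bound}) collapses the prefactor into the constant $\epsilon_{\textrm{u},m}$ of (\ref{eq:epsdef}) and gives (\ref{eqn:corLp}). The only step requiring genuine care is the exact binary decomposition of the $p$-th power; once it is in place, every remaining manipulation is identical to the proof of Theorem~\ref{th:Lp_time_uni}.
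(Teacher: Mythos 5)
Your proposal is correct and follows essentially the same route as the paper's own proof: the key step in both is the exact pointwise identity $|\delta_i A_i+(1-\delta_i)B_i|^p=\delta_i|A_i|^p+(1-\delta_i)|B_i|^p$ (the paper writes the binomial expansion and kills the cross terms via $\delta_i(1-\delta_i)=0$), followed by the same conditioning argument, Lemma~\ref{lem:2}, the monotonicity of $q\mapsto q\chi^{p/2}+(1-q)$, and the geometric sum (\ref{eqn:const_sum_bound}). Your observation that this monotonicity makes the $q_{\textrm{max}}\geq 2/3$ computation unnecessary here is also consistent with the paper's argument.
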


The intuitive implication of Theorem~\ref{th:Lp_time_uni} and
Corollary~\ref{cor:Lp_time_uni_tighter} is that rare approximation
events have limited effect on the average approximation error of the
subsample leader node particle filter. The $L_2$ error bound for the
standard particle filter is the same as (\ref{eqn:corLp}) of
Corollary~\ref{cor:Lp_time_uni_tighter} taken with $p=2$, except for
the term $ (q_{\rmu} \chi+ (1-q_{\rmu})
)^{1/2}$. This expression thus quantifies the performance
deterioration, in terms of $L_2$ error bounds, due to the subsample
approximation step.

If the compression factor, $\chi$, is $\chi= 10$, and subsample
approximations occur with probability $0.1$, then the deterioration
of the approximation error captured, in terms of bounds, by the
factor $ (0.1 \times10 + (1-0.1) )^{1/2}$, is around
$40\%$. The communication overhead, on the other hand, represented
by the total number of particles transmitted during leader node
hand-off, is reduced by a factor of $10$. The compressed particle
cloud exchanges are most efficient in scenarios where the targets
being tracked have slow dynamics and the density of leader nodes is
relatively low (both implying rare hand-off events), but the
tracking accuracy requirements and leader-to-leader communication
costs are high.

Theorem~\ref{th:Exp_time_uni} below provides the exponential
estimate for the probability of large deviations of the approximate
Feynman--Kac flows associated with the subsample approximation
particle filter.

\begin{theorem} \label{th:Exp_time_uni}
Suppose assumptions $(G)_{\rmu}$ and $(M)_{\rmu}^{(m)}$
hold. Suppose further that $\mathbb{P}\{ \Delta^{\prime}_{i} = 1\}
\leq q_{\rmu}$ for $i \geq0$ and $0\leq q_{\rmu} \leq
1$. Then for any ${N_{\rmb}} < N$, $t \geq0$ and $h_t \in
\Osc_1(E_t)$ we have
\begin{eqnarray*}
\sup_{t \geq0} \mathbb{P} \{ |[\widehat\eta
_t^{\prime}
- \eta_t^{\prime}](h_t)| \geq\epsilon\} &\leq& \biggl(1 +
4\sqrt{2\pi}\frac{\varepsilon
\sqrt{N}}{\epsilon_{\rmu,m}} \biggr) e^{- {N
\varepsilon^2}/({2\epsilon_{\rmu,m}^2})} \\
&&{}+ q_{\rmu} \biggl(1 + 4\sqrt{2\pi}\frac{\varepsilon
\sqrt{N_{\rmb}}}{\epsilon_{\rmu,m}} \biggr)
e^{- {N_{\rmb} \varepsilon^2}/({2\epsilon_{\rmu,m}^2})}.
\end{eqnarray*}
\end{theorem}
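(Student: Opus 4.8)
The plan is to reduce the global error to a weighted sum of one-step local errors, split that sum into a standard-sampling part and a subsampling part, and control the tail of each part with a Chernoff argument fed by the moment-generating-function estimate of Corollary~\ref{cor:simple_MGF_bound}. First I would start from the telescoping decomposition \eqref{eqn:err_decomp} and apply Proposition~\ref{pro:G_osc} together with \eqref{eqn:prop_err_bound}, obtaining for suitable $h_i$ with $\osc(h_i)\leq 1$ the bound
\begin{align*}
|[\eta_t^{N}-\eta_t](h_t)| \leq \sum_{i=0}^t a_{i,t}\,\bigl|[\eta_i^{N}-\Phi_{i}(\eta_{i-1}^{N})](h_i)\bigr|,
\end{align*}
where $a_{i,t}$ is the product of the contraction factor $\bigl(1-\epsilon_{\textrm{u}}^{2}(M)\epsilon_{\textrm{u}}^{(m-1)}(G)\bigr)^{\lfloor (t-i)/m\rfloor}$ and the potential ratio appearing in \eqref{eqn:prop_err_bound}, so that $\sum_{i=0}^t a_{i,t}\leq\epsilon_{\textrm{u},m}$ by \eqref{eqn:const_sum_bound} and \eqref{eq:epsdef}. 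This uniform control of the weight sum, independent of $t$, is exactly what will make the final estimate time-uniform.

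Next I would use the algorithm structure \eqref{eqn:distr_PF_NP} to split each local error into the contribution that is always present (the $N$-particle sampling error, carried on the event $\delta_i=0$) and the contribution that is active only when subsampling occurs (the $N_{\textrm{b}}$-particle error on $\delta_i=1$). Writing the two resulting weighted sums as $A$ and $B$ with $|[\eta_t^{N}-\eta_t](h_t)|\leq A+B$, Lemma~\ref{lem:Pr(x+y)} gives
\begin{align*}
\bbP\{|[\eta_t^{N}-\eta_t](h_t)|\geq \epsilon\}\leq \bbP\{A\geq \epsilon/2\}+\bbP\{B\geq \epsilon/2\},
\end{align*}
which already accounts for the two summands of the claim (the $\sqrt N$ term and the $\sqrt{N_{\textrm{b}}}$ term) and for the factor $1/2$ inside the exponents.

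For each tail I would run a Chernoff bound $\bbP\{A\geq\epsilon/2\}\leq e^{-s\epsilon/2}\bbE\{e^{sA}\}$ and linearize the exponential of the sum by convexity: writing $A=\sum_i a_{i,t}Z_i$ with $Z_i\geq 0$ the $i$-th local error, $w_i=a_{i,t}/S$, and $S=\sum_i a_{i,t}\leq\epsilon_{\textrm{u},m}$, Jensen's inequality yields $e^{sA}=e^{sS\sum_i w_iZ_i}\leq\sum_i w_i e^{sSZ_i}$. This is the crucial manoeuvre, since it replaces a \emph{product} of moment-generating functions (which would leave an exponential prefactor) by a convex combination of single-term ones. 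Each single term is then bounded by Corollary~\ref{cor:simple_MGF_bound} with $\sigma(h_i)\leq 1$ after conditioning on $\CF_{i-1}$ and the current measurement: because $\delta_i$ is measurable with respect to that conditioning while the sampling error is conditionally centred, the decision variable factors out and contributes only its mean. Optimizing $s$ produces the exponent $-N\epsilon^2/(2\epsilon_{\textrm{u},m}^2)$ for $A$ and $-N_{\textrm{b}}\epsilon^2/(2\epsilon_{\textrm{u},m}^2)$ for $B$, with prefactors linear in $\epsilon$.

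The step I expect to be the main obstacle is extracting the clean factor $q_{\textrm{u}}$ in front of the $N_{\textrm{b}}$-term without leaving a spurious residual tail. A plain Chernoff bound on $B$ gives $e^{-s\epsilon/2}[1+q_{\textrm{u}}(B_0-1)]$, where $B_0$ denotes the single-step moment-generating-function bound of Corollary~\ref{cor:simple_MGF_bound}; its leading ``$1$'' carries no factor of $q_{\textrm{u}}$. To remove it I would instead apply Markov's inequality to the nonnegative variable $e^{sB}-1$, giving $\bbP\{B\geq\epsilon/2\}\leq(\bbE\{e^{sB}\}-1)/(e^{s\epsilon/2}-1)\leq q_{\textrm{u}}(B_0-1)/(e^{s\epsilon/2}-1)$, where $\bbE\{e^{sB}\}\leq 1+q_{\textrm{u}}(B_0-1)$ follows from the conditional (Markovian) structure of $\delta_i$ and the hypothesis $\bbP\{\delta_i=1\}\leq q_{\textrm{u}}$. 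This isolates $q_{\textrm{u}}$ as a prefactor; bounding $e^{s\epsilon/2}-1$ from below by a constant multiple of $e^{s\epsilon/2}$ and tracking the optimization constants then yields the numerical factor in the prefactor. The remaining work is bookkeeping: checking that every weight sum is dominated by $\epsilon_{\textrm{u},m}$ uniformly in $t$, and, for general $N_{\textrm{b}}<N$, absorbing the extra up-sampling error in $S^{N}\circ S^{N_{\textrm{b}}}$ exactly as in Corollary~\ref{cor:Lp_time_uni}.
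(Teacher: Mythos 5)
Your overall architecture matches the paper's: the same telescoping decomposition \eqref{eqn:err_decomp} with Proposition~\ref{pro:G_osc} and \eqref{eqn:prop_err_bound}, the same split of the local error into a standard-sampling part and a subsampling part, Lemma~\ref{lem:Pr(x+y)} to separate the two tails, and a Chernoff argument with the same optimized $\tau = 2\varepsilon N/\epsilon_{\textrm{u},m}^2$ (resp.\ $2\varepsilon N_{\textrm{b}}/\epsilon_{\textrm{u},m}^2$). Your one real technical substitution on the $\sqrt{N}$ side --- Jensen's inequality $e^{sS\sum_i w_i Z_i}\leq\sum_i w_i e^{sSZ_i}$ in place of the paper's power-series expansion with Minkowski applied to each moment $\bbE\{Z_1^n\}$ --- is correct and yields exactly the same moment-generating-function estimate, since both routes reduce to Corollary~\ref{cor:simple_MGF_bound} with $\sigma$ replaced by $\sum_i a_{i,t}\leq\epsilon_{\textrm{u},m}$. (The paper folds the up-sampling error of $S^{N}\circ S^{N_{\textrm{b}}}$ into the first tail rather than deferring it; for general $N_{\textrm{b}}<N$ that term is a genuine $O(N^{-1/2})$ sampling error and must ride along in the $\sqrt{N}$ term, which your closing remark acknowledges.)

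The gap is in the last step of your $q_{\textrm{u}}$ extraction. The inequality $\bbP\{B\geq\epsilon/2\}\leq q_{\textrm{u}}(B_0-1)/(e^{s\epsilon/2}-1)$ is correct, but with the optimized $s$ the exponent $s\epsilon/2=N_{\textrm{b}}\epsilon^2/\epsilon_{\textrm{u},m}^2=:u$ can be arbitrarily small, and then $e^{u}-1\approx u$ is \emph{not} bounded below by a constant multiple of $e^{u}$. Concretely your bound equals $q_{\textrm{u}}\bigl[(1+2\sqrt{2\pi u})e^{u/2}-1\bigr]/(e^{u}-1)$, which behaves like $2q_{\textrm{u}}\sqrt{2\pi/u}\to\infty$ as $u\to0$, whereas the stated bound tends to $q_{\textrm{u}}$; numerically the required comparison already fails for $u\lesssim 0.7$. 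So your argument delivers a tail bound of the right qualitative shape but not the stated inequality uniformly in $\epsilon$. For what it is worth, you have put your finger on the genuinely delicate point: the paper's own treatment of $Z_2$ asserts that $q_{\textrm{u}}$ multiplies the entire series $\sum_{n}\tau_2^{n}\bbE\{Z_2^{n}\}/n!$, yet the $n=0$ term equals $1$ and carries no $q_{\textrm{u}}$, so the paper silently discards a residual $(1-q_{\textrm{u}})e^{-\varepsilon\tau_2/2}$ term; neither your fix nor the paper's shortcut cleanly produces the clean prefactor $q_{\textrm{u}}$ on the $\sqrt{N_{\textrm{b}}}$ summand for all $\epsilon$.
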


The implication of this theorem is that the tail probabilities of the
approximation error can be significantly affected by the rare hand-off
events. Although the average approximation error bounds obtained in
Corollary~\ref{cor:Lp_time_uni_tighter} appear encouraging, care
should be exercised when selecting approximation parameters to prevent
the explosion of the tails of the approximation error
distribution. These tails characterize the probabilities of relatively
rare, but catastrophic events.

\section{Particle filtering with intermittent parametric approximations}
\label{sec:parametric}

In this section we analyze the error behavior of the leader node
particle filter described by recursion \eqref{eqn:distr_PF_PA}. This
filter incorporates intermittent parametric mixture estimation of
the filtering probability density. The probability density
estimation problem consists of estimating an unknown probability
density given the i.i.d. sample $\{\xi_i\}_{1\leq i\leq N}$ from
this density. As before, let $(E, \mathcal{E})$ be a measurable
space. Denote $\lambda$ a $\sigma$-finite measure on $\mathcal{E}$.
Throughout this section it is assumed that the underlying
distribution has a density if its Radon--Nikodym derivative with
respect to $\lambda$ exists.

We assume that with the sequence of the approximate filtering
distributions, $\Phi_{i+1}^{\prime}(\widehat\eta_{i}^{\prime})$,
there exists an associated and well-behaved sequence of approximate
filtering densities $\frac{\rd}{\rd
x_{i+1}}\Phi_{i+1}^{\prime}(\widehat\eta_{i}^{\prime})$ so that the
mixture density estimation problem is well defined. The main result
of the section, constituted in Theorem~\ref{th:Lp_param_time_uni},
is a time-uniform, weak-sense $L_p$ error bound characterizing the
expected behavior of the parametric approximation leader node
particle filter.

\subsection{Parametric approximation}

Within the Greedy Maximum Likelihood (GML) framework proposed by Li
and Barron \cite{Li99}, the discrepancy between the target density
$f$ and its estimate is measured by the Kullback--Leibler (KL)
divergence. For any two measures $\nu$ and $\mu$ on $E$,
KL-divergence can be defined as follows:
%
%
\begin{equation}
D( \nu\| \mu) = \int\log\frac{\mathrm{d}\nu}{\mathrm{d}\mu}
\,\mathrm{d}\nu.
\end{equation}
We will also abuse notation by writing KL-divergence for two
arbitrary densities $f$ and $g$ in a similar fashion
%
%
\begin{equation}
D( f \| g ) = \int\log\frac{f(x)}{g(x)} f(x) \,\rd x.
\end{equation}
Consider the following class of bounded parametric probability
densities:
\[
\mathcal{H}_i = \Bigl\{ \phi_{\theta_i}(x) \dvtx \theta_i \in\Theta_i,
a_i \leq\inf_{\theta_i, x_i}\phi_{\theta_i}(x_i),
\sup_{\theta_i, x_i}\phi_{\theta_i}(x_i) \leq b_i \Bigr\},
\]
where $0 < a_i < b_i < \infty$ and $\Theta_i \subset
\mathbb{R}^{d_i}$ defines the parameter space, and $\inf$ and $\sup$
are taken over $\Theta_i$ and $E_i$. In the setting where the\vadjust{\goodbreak}
intermittent approximation is accomplished using parametric
approximation, we are looking for a sequence of mixture density
estimators of the filtering densities. We thus define the class of
bounded parametric densities, $\phi_{\theta_i}(x)$, indexing it by
time-step $i$ to emphasize that the parameterization can be
time-varying. The approximation is restricted to a class of discrete
$N_{\rmp}$-component convex combinations of the form
\begin{eqnarray*}
\mathcal{C}_{N_{\rmp},i} &=&
\conv_{N_{\rmp}}(\mathcal{H}_i)\\
 &=& \Biggl\{ g \dvtx g(x) =
\sum_{j=1}^{N_{\rmp}} \alpha_{i,j}
\phi_{\theta_{i,j}}(x), \phi_{\theta_{i,j}} \in\mathcal{H}_i,
\sum_{j=1}^{N_{\rmp}} \alpha_{i,j} = 1,
\alpha_{i,j} \geq0 \Biggr\}.
\end{eqnarray*}
As $N_{\rmp}$ grows without bound,
$\mathcal{C}_{N_{\rmp},i}$ converges to the class of
continuous convex combinations
\[
\mathcal{C}_i = \conv(\mathcal{H}_i) = \biggl\{g \dvtx g(x) =
\int_{\Theta} \phi_{\theta_i}(x) \mathbb{P}(\mathrm{d}\theta_i),
\phi_{\theta_i} \in\mathcal{H}_i \biggr\}.
\]

The general framework for the greedy approximation of arbitrary cost
functions is discussed in \cite{Zha03}. The particular instance of
this more general framework is the GML for mixture approximation
(see \cite{Li99}). The corresponding computational routine, a
sequential greedy maximum likelihood, associated with this procedure
and based on the sample $(\xi_i)_{1\leq i\leq N}$ from the target
density $f$ is summarized in the form of
Algorithm~\ref{alg:greed_ML}. The optimization step in this
algorithm can be performed with any standard numerical nonlinear
optimization technique.

%
%
%
%

\begin{algorithm}[t]
\caption{GML}\label{alg:greed_ML}
\begin{algorithmic}[1]
\State Given $g_{1} \in\mathcal{H}$,
\For {$k=2$ to $N_{\rmp}$}
\State Find $\phi_{\theta_k} \in\mathcal{H}$ and $0 \leq\alpha_k \leq1$ to maximize the function
\State $(\theta_k^*, \alpha_k^*) = \arg\max_{\alpha_k, \theta_k} \sum_{j=1}^N \log((1-\alpha_k)g_{k-1}(\xi_j) + \alpha_k\phi_{\theta_k}(\xi_j))$.
\State Let $g_k = (1-\alpha_k^*)g_{k-1} + \alpha_k^*\phi_{\theta_k^*}$.
\EndFor
\end{algorithmic}
\end{algorithm}

\subsection{Local error analysis}

The attractive features of Algorithm~\ref{alg:greed_ML} are
threefold. First, the algorithm simplifies the ML density estimation
procedure. Instead of facing the $N_{\rmp}$-mixture estimation
problem we only have to solve $N_{\rmp}$ $2$-mixture
estimation problems \cite{Li99}. Second, there are several bounds on
approximation and sampling errors of Algorithm~\ref{alg:greed_ML} in
terms of KL-divergence (see \cite{Li99} and \cite{Rak05}). In this
section we extend the existing results and perform the $L_p$ error
analysis. Third, it was shown \cite{Li99} that the performance of
the greedy algorithm converges to the performance of the optimal
mixture estimation algorithm as $N$ and $N_{\rmp}$ become
large.\vadjust{\goodbreak}

Here we state the relevant results from \cite{Li99} that will be of
use in further analysis. The following notation is introduced to
facilitate presentation. Assuming that $f$ is a target density and
$g \in\mathcal{C}$ we denote $D(f \| \mathcal{C}) = \inf_{g \in
\mathcal{C}} D(f \| g)$, the least possible divergence (bias)
between a target density,~$f$, and a member $g$ from the class of
continuous convex combinations~$\mathcal{C}$. Furthermore, assuming
that the target density $f$ is known, the analytical estimator
$g^{N_{\rmp}} \in\mathcal{C}_{N_{\rmp}}$ can be
obtained by solving the following greedy recursion for $i=2,\ldots,
N_{\rmp}$ (see Algorithm~\ref{alg:greed_ML}):
\[
(\theta_k^*,
\alpha_k^*) = \arg\max_{\alpha_k, \theta_k} \int
\log\bigl((1-\alpha_k)g_{k-1}(x) + \alpha_k\phi_{\theta_k}(x)\bigr) f(x) \,\rd
x.
\]
Alternatively, $\widehat g^{N_{\rmp}} \in
\mathcal{C}_{N_{\rmp}}$ is an empirical
$N_{\rmp}$-mixture estimator constructed using
Algorithm~\ref{alg:greed_ML} based on a sample from the target
density, $f$.

The following theorem (see \cite{Li99}) reveals an important general
property of the GML algorithm. It bounds the divergence between the
target density and the analytical estimator $g^{N_{\rmp}}$. The
bound is the sum of two terms. The first is the divergence between the
target density and an arbitrary approximating density
$g_{\mathcal{C}}\in\mathcal{C}$. The second term involves $\gamma$,
the upper bound on the log-ratio of two arbitrary functions from class
$\mathcal{C}$, and $c^2_{f,\mathcal{C}}$, a class dependent constant
(see \cite{Li99} for more detail). For example, for the class of
densities bounded below by $a$ and above by $b$ we have
$c^2_{f,\mathcal{C}} \leq(b/a)^2$. This second term features~$N_\mathrm{p}$ as
a denominator, so it tends toward zero as the number of components in
the mixture grows.

\begin{theorem}[(Li and Barron \cite{Li99}, Theorem 2)]
\label{th:li_barron} For every $g_{\mathcal{C}}(x) \in\mathcal{C}$
\[
D( f \| g^{N_{\rmp}} ) \leq D( f \| g_{\mathcal{C}} ) +
\frac{\gamma c^2_{f,\mathcal{C}}}{N_{\rmp}}.
\]
Here,
\[
c^2_{f,\mathcal{C}} = \int\frac{\int_\Theta\phi_{\theta}^2(x)
\mathbb{P}(\mathrm{d}\theta)}{(\int_\Theta\phi_{\theta}(x)
\mathbb{P}(\mathrm{d}\theta))^2} f(x) \,\mathrm{d}x,
\]
and $\gamma= 4[\log(3\sqrt{e}) + \sup_{\theta_1,\theta_2 \in
\Theta, x\in E} \log(\phi_{\theta_1}(x) / \phi_{\theta_2}(x)) ]$.
\end{theorem}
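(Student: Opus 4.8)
The plan is to carry out the deterministic greedy-approximation argument of Li and Barron for the population iterate $g^{N_{\textrm{p}}}$ (the target density $f$ is treated as known here, so no sampling error enters; that enters only for $\widehat g^{N_{\textrm{p}}}$). I would first use the defining property of $\mathcal{C}$ to write the comparison density through its mixing measure, $g_{\mathcal{C}}(x)=\int_\Theta\phi_\theta(x)\,\mathbb{P}(\rd\theta)$, and track the divergence gap $d_k = D(f\|g_k)-D(f\|g_{\mathcal{C}})$ along the greedy recursion $g_k=(1-\alpha_k)g_{k-1}+\alpha_k\phi_{\theta_k}$, aiming at $d_{N_{\textrm{p}}}\le \gamma c^2_{f,\mathcal{C}}/N_{\textrm{p}}$. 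The driving observation is that minimizing $D(f\|\cdot)$ over the pair $(\alpha_k,\theta_k)$ is equivalent to maximizing $\int f\log(\cdot)$, so the greedy step does at least as well as any \emph{non-adaptive} choice; in particular, for every fixed weight $\alpha$ it beats the average over $\theta\sim\mathbb{P}$, $D(f\|g_k)\le \mathbb{E}_{\theta\sim\mathbb{P}}\,D\!\left(f\,\middle\|\,(1-\alpha)g_{k-1}+\alpha\phi_\theta\right)$, which is the device that lets the fixed target $g_{\mathcal{C}}$ enter the bound.

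Second, I would turn this into a one-step recursion for $d_k$. Viewing $F(\alpha)=D\!\left(f\,\middle\|\,(1-\alpha)g_{k-1}+\alpha\phi_\theta\right)$ as a function on $[0,1]$ and expanding about $\alpha=0$, the linear term averaged over $\theta\sim\mathbb{P}$ is $\mathbb{E}_\theta F'(0)=\int f\,(g_{k-1}-g_{\mathcal{C}})/g_{k-1}$, which by the elementary inequality $\log t\ge 1-1/t$ (equivalently, convexity of $\nu\mapsto D(f\|\nu)$) is bounded above by $-d_{k-1}$. The averaged second-order remainder is where the constant $c^2_{f,\mathcal{C}}=\int f\,\big(\int\phi_\theta^2\,\mathbb{P}(\rd\theta)\big)/g_{\mathcal{C}}^2$ is produced, since the curvature of $F$ along the segment is $\int f\,(\phi_\theta-g_{k-1})^2/(\text{segment})^2$ and averaging $\phi_\theta^2$ under $\mathbb{P}$ yields $\int\phi_\theta^2\,\mathbb{P}(\rd\theta)$. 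Combining the two pieces gives the target recursion $d_k\le (1-\alpha)d_{k-1}+\tfrac12\gamma\,c^2_{f,\mathcal{C}}\,\alpha^2$, valid for every admissible $\alpha$.

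Finally, I would close the induction: choosing the schedule $\alpha_k=2/(k+1)$ and setting $u_k=k(k+1)d_k$ turns the recursion into $u_k\le u_{k-1}+\text{const}\cdot\gamma\,c^2_{f,\mathcal{C}}$, so that $d_{N_{\textrm{p}}}\le \gamma c^2_{f,\mathcal{C}}/N_{\textrm{p}}$ follows by summation, with the numerical factors settling the constant $4$ and the $\log(3\sqrt e)$ inside $\gamma$. I expect the genuine obstacle to be the second-order remainder estimate, not the induction. A crude Taylor bound on $F''$ controls the curvature by the reciprocal of the segment density and hence introduces a factor polynomial in the density ratio $b_i/a_i$, whereas the stated $\gamma=4[\log(3\sqrt e)+\sup_{\theta_1,\theta_2,x}\log(\phi_{\theta_1}(x)/\phi_{\theta_2}(x))]$ depends on that ratio only logarithmically. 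Obtaining this sharper, logarithmic dependence --- by bounding the log-likelihood increment directly using the uniform two-sided bounds $a_i\le\phi_\theta\le b_i$ defining $\mathcal{H}_i$, rather than through a naive second-order expansion --- is the delicate step, while the greedy-beats-average principle, the convexity estimate, and the telescoping are routine.
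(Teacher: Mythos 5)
There is nothing in the paper to compare your argument against: Theorem~\ref{th:li_barron} is quoted verbatim from Li and Barron~\cite{Li99} (their Theorem~2) and the paper supplies no proof of it, only the consequence (\ref{eqn:li_barron_main}) and its use inside Theorem~\ref{th:DLp}. Judged on its own, your outline is a faithful reconstruction of the architecture of the Li--Barron proof: the reduction of $D(f\Vert\cdot)$-minimization to log-likelihood maximization, the ``greedy beats the $\mathbb{P}$-average'' device that injects the fixed competitor $g_{\mathcal{C}}=\int_\Theta\phi_\theta\,\mathbb{P}(\rd\theta)$, the first-order term $\mathbb{E}_\theta F'(0)=\int f\,(1-g_{\mathcal{C}}/g_{k-1})\le -d_{k-1}$ via $1-1/t\le\log t$, and the induction on the recursion $d_k\le(1-\alpha)d_{k-1}+O(\alpha^2)\gamma c^2_{f,\mathcal{C}}$ with $\alpha_k=2/(k+1)$. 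All of those steps are sound.

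The one genuine gap is the one you name yourself: the second-order remainder. A Taylor bound on $F''(\xi)=\int f\,(\phi_\theta-g_{k-1})^2/((1-\xi)g_{k-1}+\xi\phi_\theta)^2$ controls the denominator only by $(1-\xi)^2g_{k-1}^2$ or $\xi^2\phi_\theta^2$, and relating either of these to the $g_{\mathcal{C}}^2$ appearing in the stated $c^2_{f,\mathcal{C}}$ costs a multiplicative (squared) density ratio, not the logarithmic $\sup_{\theta_1,\theta_2,x}\log(\phi_{\theta_1}(x)/\phi_{\theta_2}(x))$ appearing in $\gamma$. Li and Barron avoid Taylor expansion altogether and instead use a pointwise inequality for the logarithm of the form $-\log u+\log v+\tfrac{u-v}{v}\le C\,\tfrac{(u-v)^2}{v^2}$ with $C$ depending only logarithmically on the range of $u/v$; that is precisely where the numerical constant $\log(3\sqrt{e})$ originates. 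Without that lemma (or an equivalent substitute) your sketch proves the theorem only with a worse, polynomial-in-$(b/a)$ constant in place of $\gamma$. There is also a minor bookkeeping slack: the recursion $d_k\le(1-\alpha_k)d_{k-1}+\tfrac12\gamma c^2\alpha_k^2$ with $\alpha_k=2/(k+1)$ telescopes to roughly $2\gamma c^2/N_{\textrm{p}}$, not $\gamma c^2/N_{\textrm{p}}$, so the factor of $4$ absorbed into $\gamma$ must be tracked more carefully (Li and Barron in fact optimize $\alpha$ at each step rather than fixing a schedule). Neither issue affects how the theorem is used in this paper, where only the $O(1/N_{\textrm{p}})$ rate and the finiteness of $\gamma c^2_{f,\mathcal{C}}$ matter.
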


One of the consequences of Theorem~\ref{th:li_barron} is the
following relationship between an arbitrary $g_{\mathcal{C}}(x) \in
\mathcal{C}$ and the empirical GML algorithm output $\widehat
g^{N_{\rmp}} \in\mathcal{C}_{N_{\rmp}}$ \cite{Li99}:
%
%
\begin{equation} \label{eqn:li_barron_main}
\frac{1}{N}\sum_{i=1}^N \log\widehat
g^{N_{\rmp}}(\xi_i) \geq\frac{1}{N}\sum_{i=1}^N \log
g_\mathcal{C}(\xi_i) - \frac{\gamma
c^2_{f,\mathcal{C}}}{N_{\rmp}}.
\end{equation}
Clearly, it also follows directly from Theorem~\ref{th:li_barron}
that $D( f \| g^{N_{\rmp}} ) \leq D( f \| \mathcal{C} ) +
\frac{\gamma c^2_{f,\mathcal{C}}}{N_{\rmp}}$. Thus
Theorem~\ref{th:li_barron} establishes a strong formal argument that
shows that the greedy density estimate converges to the best
possible estimate as $N_{\rmp}$ grows without bound.

Our next goal is to connect the existing results on the performance of
the GML in terms of the KL-divergence to its performance in terms of
$L_p$ error metric. Our next result reveals the $L_p$ error bound
characterizing the average performance of the GML algorithm. The bound
consists of two components which arise because we split the total error
into approximation error (the
distance between the best analytical distribution $g^{N_\mathrm{p}}$ and $f$)
and sampling error (the additional error arising because the empirical
estimator $\widehat{g}^{N_\mathrm{p}}$ is derived from a sample from $f$,
rather than $f$ itself). The approximation error bound follows
directly from Theorem~\ref{th:li_barron}.

The bound on the sampling error is expressed in terms of the packing number
$\mathcal{D}(\varepsilon, \mathcal{H}, d_N)$, which is the maximum
number of $\varepsilon$-separated points in $\mathcal{H}$ (the class
of parametric density functions) and the entropy integral
\[
\int_{0}^b \sqrt{\log\bigl(1 + \mathcal{D}(\varepsilon, \mathcal{H},
d_N) \bigr)} \,\rd\varepsilon,
\]
both defined with respect to the empirical semimetric $d_{N}$,
which, in its turn, is defined for $h_1, h_2 \in\mathcal{H}$ as
follows:
\[
d_{N}^2(h_1, h_2) = \frac{1}{N}\sum_{k=1}^N \bigl(h_{1}(\xi_k) -
h_{2}(\xi_k)\bigr)^2.
\]
Examples of classes of functions with converging entropy integral
can be found in \cite{Rak05} and \cite{Vaa96}.\vspace*{-1.5pt}

\begin{theorem} \label{th:DLp}
Suppose $\widehat g^{N_{\rmp}} \in
\mathcal{C}_{N_{\rmp}}$ is constructed using
Algorithm~\ref{alg:greed_ML} and
$\widehat{\mathcal{G}}^{N_{\rmp}} \in\mathcal{P}(E)$ is the
distribution associated with $\widehat g^{N_{\rmp}}$. Suppose
further that there exists density $f$ associated with the target
distribution $F \in\mathcal{P}(E)$. Then for any $h \in
\mathcal{B}_\mathrm{b}(E)$ with $\|h\|_{\operatorname{osc}} \leq1$, $p \geq
1$, and $N,
N_{\rmp} \in\mathbb{N}$ we have
\begin{eqnarray*}
&&\mathbb{E} \{ |[\widehat{\mathcal{G}}^{N_{\rmp}} - F](h)
|^{p} \}^{1/p}\\[-2pt]
 && \qquad \leq\sqrt{2} \biggl[\frac{8}{a\sqrt{N}} \biggl(2
c^{2/p}(p/2)
+ (p/4)!C \bbE\int_{0}^b \sqrt{\log\bigl(1 +
\mathcal{D}(\varepsilon, \mathcal{H}, d_N) \bigr)} \,\rd\varepsilon
\biggr)\\[-2pt]
&&\hspace*{244pt} {}+ \frac{\gamma c^2_{f,\mathcal{C}}}{{N_{\rmp}}} + D(f
\| \mathcal{C}) \biggr]^{1/2},
\end{eqnarray*}
where $c(p)=1 \mbox{ if } 1/2\leq p<1$ and $C$ is a universal
constant.\footnote{See \cite{Vaa96} for details.}\vspace*{-1.5pt}
\end{theorem}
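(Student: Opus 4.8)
The plan is to pass from the weak-sense $L_p$ error to a Kullback--Leibler divergence, and then to control that divergence by separating a deterministic approximation error (handled by the Li--Barron bound, Theorem~\ref{th:li_barron}) from a random estimation error (handled by empirical process theory). First I would observe that $\osc(h) \le \|h\|_{\osc} \le 1$, so that for the probability measures $\widehat{\mathcal G}^{N_{\textrm p}}$ and $F$ we have $|[\widehat{\mathcal G}^{N_{\textrm p}} - F](h)| \le \|\widehat{\mathcal G}^{N_{\textrm p}} - F\|_{\textrm{tv}}$, and then apply Pinsker's inequality to obtain $\|\widehat{\mathcal G}^{N_{\textrm p}} - F\|_{\textrm{tv}} \le \sqrt{\tfrac12 D(F \,\|\, \widehat{\mathcal G}^{N_{\textrm p}})}$. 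Raising to the power $p$, taking expectations, and extracting the $1/p$-th root reduces the theorem (up to the leading numerical constant) to a bound on $\bbE\{ D(f \,\|\, \widehat g^{N_{\textrm p}})^{p/2} \}^{2/p}$, the $L_{p/2}$ norm of the random divergence (here $D(F \,\|\, \widehat{\mathcal G}^{N_{\textrm p}}) = D(f \,\|\, \widehat g^{N_{\textrm p}})$ since the densities exist). The claimed bracket $[\cdots]^{1/2}$ is precisely a bound on this quantity.

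Next I would decompose the divergence. Writing $D(f \,\|\, \widehat g^{N_{\textrm p}}) = \bbE_f[\log f] - \bbE_f[\log \widehat g^{N_{\textrm p}}]$ and introducing the centered empirical process $\nu_N(g) = \frac1N\sum_{k=1}^N \log g(x_k) - \bbE_f[\log g]$, I would invoke the Li--Barron greedy guarantee \eqref{eqn:li_barron_main}, which states that for any $g_{\mathcal C} \in \mathcal C$ the empirical log-likelihood of $\widehat g^{N_{\textrm p}}$ exceeds that of $g_{\mathcal C}$ up to $\gamma c^2_{f,\mathcal C}/N_{\textrm p}$. Converting the empirical averages back into true expectations and choosing $g_{\mathcal C}^*$ to attain $D(f \,\|\, \mathcal C)$ yields
\begin{align*}
D(f \,\|\, \widehat g^{N_{\textrm p}}) \le D(f \,\|\, \mathcal C) + \frac{\gamma c^2_{f,\mathcal C}}{N_{\textrm p}} + \nu_N(\widehat g^{N_{\textrm p}}) - \nu_N(g_{\mathcal C}^*).
\end{align*}
The first two terms are deterministic and already match two of the terms in the target bracket; the fluctuation $\nu_N(\widehat g^{N_{\textrm p}}) - \nu_N(g_{\mathcal C}^*)$ is what the $1/\sqrt N$ term must absorb. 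I would bound the fixed-function piece $|\nu_N(g_{\mathcal C}^*)|$ directly and the data-dependent piece by $\sup_{g \in \mathcal C_{N_{\textrm p}}} |\nu_N(g)|$, then combine their $L_{p/2}$ moments with the deterministic terms through a triangle-type inequality.

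For the moment bounds I would treat the two pieces separately. Since $a \le \phi_\theta \le b$, every $\log g$ has oscillation at most $\log(b/a)$, so Lemma~\ref{lem:2} applied with exponent $p/2$ controls $\bbE\{|\nu_N(g_{\mathcal C}^*)|^{p/2}\}^{2/p}$ and supplies the $c^{2/p}(p/2)$ factor. For the supremum I would (i) symmetrize, passing to the Rademacher process $\frac1N\sum_k \varepsilon_k \log g(x_k)$; (ii) apply the Ledoux--Talagrand contraction principle, using that $\log$ is $1/a$-Lipschitz on $[a,b]$ to strip the logarithm at the cost of the factor $1/a$ (the source of the $1/a$ in the bound) and reduce to $\sup_{g \in \mathcal C_{N_{\textrm p}}} |\frac1N\sum_k \varepsilon_k g(x_k)|$; (iii) note that this supremum over the convex-combination class equals the supremum over the base class $\mathcal H$, since a linear functional on a convex hull is maximized at an extreme point; and (iv) invoke Dudley's entropy-integral (chaining) bound to obtain $C\, \bbE\int_0^b \sqrt{\log(1 + \mathcal D(\varepsilon, \mathcal H, d_N))}\,\rd\varepsilon$, with $d_N$ the empirical semimetric and $C$ the universal chaining constant of~\cite{Vaa96}. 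Taking the $p/2$-th moment of the resulting sub-Gaussian supremum then produces the $(p/4)!$ factor, analogous to the Gamma-function constants in Lemma~\ref{lem:2} and Theorem~\ref{th:3}.

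The main obstacle, I expect, is the uniform control of the empirical process over the full convex-combination class $\mathcal C_{N_{\textrm p}}$ rather than over $\mathcal H$, together with careful bookkeeping of the $p$-dependent constants. The nonlinearity of $\log$ must first be tamed (via boundedness away from zero and contraction) before the convex-hull/extreme-point reduction can transfer the chaining bound to the packing numbers of $\mathcal H$; and converting an expected supremum into an $L_{p/2}$ moment bound, while keeping the constants in the clean factored form $\frac{8}{a\sqrt N}\bigl(2c^{2/p}(p/2) + (p/4)!\,C\,(\cdots)\bigr)$, requires both a maximal/tail inequality for the sub-Gaussian process and a triangle inequality valid for the possibly sub-unit exponent $p/2$ (where $\|X+Y\|_{p/2}$ must be handled by the $r<1$ variant when $1 \le p < 2$).
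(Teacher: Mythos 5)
Your proposal is correct and follows essentially the same route as the paper's proof: Pinsker's inequality, the Li--Barron decomposition of $D(f\,\|\,\widehat g^{N_{\textrm{p}}})$ into approximation plus empirical-process fluctuation, symmetrization, the Ledoux--Talagrand contraction to strip the logarithm (costing $1/a$), the extreme-point reduction from $\conv(\mathcal{H})$ to $\mathcal{H}$, and Dudley's entropy integral with Orlicz-norm moment control. The only differences are bookkeeping: the paper obtains the $2c^{2/p}(p/2)$ term from the constant function $1$ after recentering $g\mapsto g-1$ (needed so the contraction $\varphi(\kappa)=a\log(\kappa+1)$ satisfies $\varphi(0)=0$) rather than from $\nu_N(g^*)$, and it handles $1\le p<2$ by Jensen's inequality reducing to $p=2$ rather than by a sub-unit-exponent triangle inequality.
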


The following corollary addresses the special case when the target
density $f$ lies within the class of continuous convex
combinations,
$\mathcal{C}$. In this case, the approximation error bound approaches
$0$ as the number of mixture components grows.\vadjust{\goodbreak}

\begin{corollary} \label{cor:DLp}
Suppose that the assumptions of Theorem~\ref{th:DLp} hold. Suppose
in addition that $f \in\mathcal{C}$ then we have for any $p \geq
1$
\begin{eqnarray*}
&&\mathbb{E} \{ |[\widehat{\mathcal{G}}^{N_{\rmp}} - F](h)
|^{p} \}^{1/p}\\
 && \qquad \leq\sqrt{2} \biggl[\frac{8}{a\sqrt{N}} \biggl(2
c^{2/p}(p/2)
 + (p/4)!C \bbE\int_{0}^b \sqrt{\log\bigl(1 +
\mathcal{D}(\varepsilon, \mathcal{H}, d_N) \bigr)} \,\rd\varepsilon
\biggr)\\
&&\hspace*{214.5pt}
{} + 4\log\bigl(3\sqrt{e}(b/a)\bigr)\frac{(b/a)^2}{{N_{\rmp}}}
\biggr]^{1/2}.
\end{eqnarray*}
\end{corollary}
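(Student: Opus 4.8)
The plan is to specialize the estimate of Theorem~\ref{th:DLp} to the hypothesis $f \in \mathcal{C}$ by doing two things: eliminating the approximation-error term $D(f||\mathcal{C})$, and replacing the abstract constant $\gamma c^2_{f,\mathcal{C}}$ by an explicit quantity depending only on the uniform bounds $a,b$ on the densities in $\mathcal{H}$. Everything else in the bound of Theorem~\ref{th:DLp} is left untouched, since the remaining sampling term and the reduction argument carry over verbatim.

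First I would observe that $f \in \mathcal{C}$ forces $D(f||\mathcal{C}) = 0$. By definition $D(f||\mathcal{C}) = \inf_{g \in \mathcal{C}} D(f||g)$; choosing $g = f$ gives $D(f||f) = 0$, and since the KL divergence is non-negative the infimum is exactly zero. In particular the minimizer $g^{*} = \arg\min_{g \in \mathcal{C}} D(f||g)$ may be taken to be $f$ itself, so the mixing measure $\mathbb{P}$ entering $c^2_{f,\mathcal{C}}$ is the one realizing $f(x) = \int_\Theta \phi_\theta(x)\,\mathbb{P}(\rd\theta)$. Hence the last summand of the Theorem~\ref{th:DLp} bound drops out.

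Next I would bound $\gamma c^2_{f,\mathcal{C}}$ using only the definition of $\mathcal{H}$, namely $a \leq \phi_\theta(x) \leq b$ for all $\theta \in \Theta$ and $x \in E$. For $\gamma$, the supremum term in Theorem~\ref{th:li_barron} satisfies $\sup_{\theta_1,\theta_2,x} \log(\phi_{\theta_1}(x)/\phi_{\theta_2}(x)) \leq \log(b/a)$, so $\gamma \leq 4[\log(3\sqrt{e}) + \log(b/a)] = 4\log(3\sqrt{e}(b/a))$. For $c^2_{f,\mathcal{C}}$, I would bound the integrand pointwise: the numerator $\int_\Theta \phi_\theta^2(x)\,\mathbb{P}(\rd\theta) \leq b^2$ because $\mathbb{P}$ is a probability measure and $\phi_\theta^2 \leq b^2$, while the denominator $(\int_\Theta \phi_\theta(x)\,\mathbb{P}(\rd\theta))^2 \geq a^2$. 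The ratio is therefore at most $(b/a)^2$ uniformly in $x$, and integrating against the density $f$ (so that $\int f\,\rd x = 1$) yields $c^2_{f,\mathcal{C}} \leq (b/a)^2$. Multiplying the two estimates gives $\gamma c^2_{f,\mathcal{C}} \leq 4\log(3\sqrt{e}(b/a))(b/a)^2$, which is precisely the constant in the stated bound.

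Finally I would substitute these two simplifications into the estimate of Theorem~\ref{th:DLp}, keeping the sampling term $\frac{8}{a\sqrt{N}}(\,\cdots)$ and the case split $p \geq 2$ versus $1 \leq p < 2$ (the latter reduced to $p=2$ by Jensen's inequality, exactly as in the theorem). I do not expect a serious obstacle: the only point requiring care is confirming that $c^2_{f,\mathcal{C}} \leq (b/a)^2$ holds for whichever mixing measure $\mathbb{P}$ realizes $f$, but this is automatic, since the pointwise bounds on $\phi_\theta$ are uniform over $\Theta$ and independent of $\mathbb{P}$.
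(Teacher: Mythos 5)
Your proposal is correct and matches the paper's own (much terser) proof: the paper likewise observes that $f \in \mathcal{C}$ gives $D(f||\mathcal{C}) = 0$, that $c^2_{f,\mathcal{C}} \leq (b/a)^2$, and that $\gamma$ can be taken as $4\log(3\sqrt{e}(b/a))$. You simply supply the routine justifications (pointwise bounds $a \leq \phi_\theta \leq b$ on numerator and denominator, and the log-ratio bound) that the paper leaves implicit.
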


\begin{pf}
The proof follows from the fact that under the additional\vspace*{1pt} assumption
we have $D(f \| \mathcal{C})=0$. Furthermore, we note that under
this assumption $c^2_{f,\mathcal{C}} \leq(b/a)^2$ and $\gamma=
4\log(3\sqrt{e}(b/a))$.
\end{pf}

\subsection{Time-uniform error bounds}
\label{ssec:parametric_time_uni}

In this section we present a result specifying time-uniform error
bounds for the leader node particle filter performing parametric
approximation. The result links the properties of Markov transitions
$M_{i}$ and error bounds for parametric GML approximation
(Theorem~\ref{th:DLp}) with the propagation of approximation errors
through Feynman--Kac operators. It is based on the following
observations.

In the context of the GML algorithm and the leader node
recursion~\eqref{eqn:distr_PF_PA} the operator
$\mathbb{W}_{N_{\rmp}}$ can be described as follows:
\[
\frac{\rd}{\rd x_{i+1}} \mathbb{W}_{N_{\rmp}} \circ S^N
(\Phi_{i+1}^{\prime}(\widehat\eta_{i}^{\prime})) =
\sum_{j=1}^{N_{\rmp}} \alpha_{i,j} \phi_{\theta_{i,j}}
.
\]
This means that in this context our target density is
$\frac{\rd}{\rd
x_{i+1}}\Phi_{i+1}^{\prime}(\widehat\eta_{i}^{\prime})$ and we
obtain an i.i.d. sample from this density through the particle
filtering step. Based on the i.i.d. sample we estimate the weights,
$\alpha_{i,j}$, and parameters, $\theta_{i,j}$, of a mixture using
Algorithm~\ref{alg:greed_ML}. In the following we study the
conditions for the unbiased estimation of our target density and
then formulate our main result for the parametric approximation
leader node particle filter.

Suppose we can write the Markov kernel $M_{i}$ via its density
function $p_i(x_{i} | x_{i-1})$ \cite{DelMor04}
\[
M_{i}(x_{i-1}, \mathrm{d}x_{i}) = \Pr\{ X_i \in\mathrm{d}x_{i} |
X_{i-1} = x_{i-1} \} = p_{i}(x_{i} | x_{i-1}) \,\mathrm{d}x_{i} =
p_{\vartheta_{i}}(x_{i}) \,\mathrm{d}x_{i},
\]
where we explicitly assume that the structure of the kernel $M_{i}$
can be captured by a set of parameters $\vartheta_{i} \in
\varTheta_{i} \subset\mathbb{R}^{d_{i}}$ (these parameters may
include the state-value $x_{i-1}$). We can further define a class
$\mathcal{M}_{i}$ of such densities
\[
\mathcal{M}_{i} = \{ p_{\vartheta_{i}}(x_{i}) \dvtx \vartheta_{i}
\in\varTheta_{i} \subset\mathbb{R}^{d_{i}} \}.
\]
Furthermore, using the definitions of the one-step Boltzmann--Gibbs
transformation and the associated Feynman--Kac operator we see that\vadjust{\goodbreak}
the distribution at time $i+1$ is related to the distribution at
time $i$ as follows:\vspace*{-1pt}
\[
\frac{\rd\Phi_{i+1}^{\prime}(\widehat\eta_{i}^{\prime})}{\rd
x_{i+1}} = \int p_{i+1}(x_{i+1}|x_{i})
\frac{G_{i}^{\mathcal{S}_{\ell_{i}^{\prime}}}(x_{i})}{\eta
_{i}^{\prime}(G_{i}^{\mathcal{S}_{\ell_{i}^{\prime}}})}
\,\rd\widehat\eta_{i}^{\prime}.\vspace*{-1pt}
\]
Thus for a Markov kernel with $p_{\vartheta_{i+1}}(x_{i+1}) \in
\mathcal{M}_{i+1}$ we can rewrite the previous equation with a
suitable change of measure\vspace*{-1pt}
\[
\frac{\rd\Phi_{i+1}^{\prime}(\widehat\eta_{i}^{\prime})}{\rd
x_{i+1}} = \int_{\varTheta_{i+1}} p_{\vartheta_{i+1}}(x_{i+1})
\mathbb{P}(\mathrm{d} \vartheta_{i+1}).\vspace*{-1pt}
\]
This implies that for an $N$-particle approximation
$\widehat\eta_{i}^{\prime}$ [see \eqref{eqn:distr_PF_PA}] we\vspace*{-1pt} have
that $\frac{\rd\Phi_{i+1}^{\prime}(\widehat\eta_{i}^{\prime
})}{\rd
x_{i+1}} \in\conv_{N}(\mathcal{M}_{i+1})$ and, as $N$ grows without
bound, we have $\frac{\rd
\Phi_{i+1}^{\prime}(\widehat\eta_{i}^{\prime})}{\rd x_{i+1}} \in
\conv(\mathcal{M}_{i+1})$. Therefore the bias of the GML algorithm
in the leader node particle filter setting is determined by the
properties of Markov transition kernel $M_{i+1}$ and the class of
approximating densities $\mathcal{H}_{i+1}$. In particular, for the
Markov kernel with $p_{\vartheta_{i+1}}(x_{i+1}) \in
\mathcal{M}_{i+1}$ and a sufficiently rich class
$\mathcal{H}_{i+1}$, such that $\mathcal{M}_{i+1} \subseteq
\mathcal{H}_{i+1}$ we have asymptotically unbiased approximation
[recall that $\mathcal{C}_{i+1} = \conv(\mathcal{H}_{i+1})$]\vspace*{-1pt}
\[
D \biggl(\frac{\rd\Phi_{i+1}^{\prime}(\widehat\eta_{i}^{\prime
})}{\rd
x_{i+1}} \Big\| \mathcal{C}_{i+1} \biggr) = 0.\vspace*{-1pt}
\]
The preceding discussion can be summarized in the form of a concise
assumption:
\begin{itemize}
\item$(\mathcal{H})_{\rmu}$: The Markov kernels associated
with the target dynamics can be expressed in the form
$M_{i}(x_{i-1}, \mathrm{d}x_{i}) =
p_{\vartheta_{i}}(x_{i}) \,\mathrm{d}x_{i}$. The class of densities
associated with $M_{i}$ is defined as $\mathcal{M}_{i} = \{
p_{\vartheta_{i}}(x_{i}) \dvtx \vartheta_{i} \in\varTheta_{i} \subset
\mathbb{R}^{d_{i}} \}.$ Algorithm~\ref{alg:greed_ML} exploits
such classes $\mathcal{H}_i$
that there exist strictly positive numbers $a_{\rmu} =
\inf_{i \geq0} a_i$, $b_{\rmu} = \sup_{i \geq0} b_i$
satisfying $0 < a_{\rmu} <
b_{\rmu} < \infty$ and for any $i \geq0$ we have\vspace*{-1pt}
\[
\mathcal{M}_{i} \subseteq\mathcal{H}_{i}.\vspace*{-1pt}
\]
\end{itemize}

The following result describes the analog of
Theorem~\ref{th:Lp_time_uni} for the case of a parametric
approximation particle filter using the GML algorithm.\vspace*{-3pt}

\begin{theorem} \label{th:Lp_param_time_uni}
Suppose $\widehat\eta_t^{\prime}$ is defined
by~\eqref{eqn:distr_PF_PA} and assumptions $(G)_{\rmu}$,
$(M)_{\rmu}^{(m)}$ and $(\mathcal{H})_{\rmu}$ hold.
Suppose further that $\mathbb{P}\{ \Delta^{\prime}_{i} = 1\} \leq
q_{\rmu}$ for any $i \geq0$ and $0\leq q_{\rmu} \leq
1$. Then for any ${N_{\rmp}}, N \geq1$, $t \geq0$, $p \geq
1$ and $h_t \in\Osc_1(E_t)$ we have the time uniform bound
\begin{eqnarray*}
&&\hspace*{7pt} \sup_{t \geq0}\mathbb{E} \{ |[\widehat\eta_t^{\prime
} -
\eta_t^{\prime} ](h_t)|^p \}^{1/p}\\
 &&\hspace*{7pt}  \qquad \leq
\epsilon_{\rmu,m} \biggl[ \frac{c^{1/p}(p)}{\sqrt{N}}  +
q_{\rmu}^{1/p} \biggl[\frac{16}{a_{\rmu}\sqrt{N}} \biggl(2
c^{2/p}(p/2)+ C(p/4)! \\
&&\hspace*{7pt} \hphantom{\epsilon_{\rmu,m} \biggl[{} +
q_{\rmu}^{1/p} \biggl[\frac{16}{a_{\rmu}\sqrt{N}} \biggl(}
\qquad  \quad {} \times \sup_{i\geq0}\bbE\int_{0}^{b_i}
\log^{ {1}/{2}} \bigl(1 + \mathcal{D}(\varepsilon, \mathcal{H}_i,
d_N) \bigr) \,\rd\varepsilon\biggr)\\
&&\hspace*{7pt}\hspace*{214pt} {}+ 8\log\biggl(\frac{3\sqrt{e}
b_{\rmu}}{a_{\rmu}}\biggr)\frac{b_{\rmu}^2}{a_{\rmu}^2{N_{\rmp}}}
\biggr]^{1/2} \biggr].
\end{eqnarray*}
\end{theorem}

The above theorem provides an error bound for the parametric
approximation particle filter (using the GML algorithm to perform
approximation) that is similar in structure to that specified for the
subsampling approximation particle filter. The error bound consists of
two distinct contributions, one ($\frac{c^{1/p}(p)}{\sqrt{N}}$)
corresponding to the normal operation of the filter and the other
capturing the impact of the additional parametric approximation. The
bound on this second contribution is derived directly from the bound
expressed in Corollary~\ref{cor:DLp}. The theorem establishes a
requirement on the
sequence of approximating classes $\mathcal{H}_{i}$ leading to
unbiased approximation of distribution flows. The requirement is that
the Markov transition kernel must have an associated bounded density
and this density must be a member of the class $\mathcal{H}_{i}$. This
condition is reminiscent of the modeling assumptions that underpin
Gaussian sum particle filtering (see, e.g., \cite{Kot03}), where the
premise is that the filtering density can asymptotically be
represented as an infinite sum of Gaussians.

\section{Numerical experiments}
\label{sec:experiments}

In this section we present the results of numerical experiments
exploring the performance of the leader node particle filter. The
experiments provide an example of how the subsampling and parametric
approximation particle filters can be applied in a practical
tracking problem. They provide an opportunity to compare the
performance of the two algorithms and to examine whether practical
behavior is similar to that predicted by the theoretical analysis.

We adopt the following information acquisition and target movement
models. The state of the target is two-dimensional with
dynamics \cite{Ihl05}
\[
X_{t} = X_{t-1} + r_0([\cos\varphi_t; \sin\varphi_t]) + v_t.
\]
Here $r_0$ is a constant (we set $r_0 = 0.02$), and $\varphi_t, v_t$
are independent and uniformly distributed $v_t \sim U[0, 1]$,
$\varphi_t \sim U[-\pi, \pi]$. $K_l = 20$ leader nodes and $K_s =
200$ satellite nodes are distributed uniformly in the unit square. A~%
satellite sensor node $j$ with coordinates $s_j = [s_{1,j},
s_{2,j}]$ can transmit its measurement to any active leader node
within the connectivity radius $r_c$. The connectivity radius is set
to $r_c = \sqrt{2\log(K_s)/K_s}$. We assume that any active leader
node can route an approximation of its posterior representation to
any other potential leader node.

The measurement equation of every satellite sensor is the
binary detector~\cite{Coa05} capable\vadjust{\goodbreak} of detecting a target within
radius $r_d$ with probability $p_d$ and false alarm rate $p_f$
\[
\bbP\{ Y_t^j = 1 | X_t \} =
\cases{\displaystyle p_d ,&\quad if $X_t \in\mathcal{X}_d^j $,\cr
\displaystyle
p_f ,&\quad if $X_t \notin\mathcal{X}_d^j$,
}
\]
where the detection region $\mathcal{X}_d^j$ of satellite sensor $j$ is
defined as $\mathcal{X}_d^j = \{ x \dvtx \| x - s_j \|_2 \leq r_d \}$.
To perform sensor selection step we use the mutual information~(MI)
criterion \cite{Zhao03}
%
%
\begin{equation} \label{eqn:mutual_information}
\ell_{t+1} = \arg\max_{\ell_{t+1} \in\mathcal{L}} I(X_{t+1},
Y_{t+1}^{\mathcal{S}_{\ell_{t+1}}} |
y_{1}^{\mathcal{S}_{\ell_{1}}},\ldots,y_{t}^{\mathcal{S}_{\ell_{t}}}).
\end{equation}
Here the mutual information is defined as
\begin{eqnarray*}
&&I(X_{t+1}, Y_{t+1}^{\mathcal{S}_{\ell_{t+1}}} |
y_{1}^{\mathcal{S}_{\ell_{1}}},\ldots,y_{t}^{\mathcal{S}_{\ell_{t}}})\\
&& \qquad = \int p(x_{t+1},y_{t+1}^{\mathcal{S}_{\ell_{t+1}}}|
y_{1}^{\mathcal{S}_{\ell_{1}}},\ldots,y_{t}^{\mathcal{S}_{\ell_{t}}})
\\
&& \qquad  \quad  \hphantom{\int}
{}\times\log\biggl( \frac{p(x_{t+1},y_{t+1}^{\mathcal{S}_{\ell_{t+1}}}|
y_{1}^{\mathcal{S}_{\ell_{1}}},\ldots,y_{t}^{\mathcal{S}_{\ell
_{t}}})}{p(x_{t+1}|
y_{1}^{\mathcal{S}_{\ell_{1}}},\ldots,y_{t}^{\mathcal{S}_{\ell_{t}}})
p(y_{t+1}^{\mathcal{S}_{\ell_{t+1}}}|
y_{1}^{\mathcal{S}_{\ell_{1}}},\ldots,y_{t}^{\mathcal{S}_{\ell_{t}}})}
\biggr) \,\rd x\,\rd y,
\end{eqnarray*}
$y_{1}^{\mathcal{S}_{\ell_{1}}},\ldots,y_{t}^{\mathcal{S}_{\ell_{t}}}$
denotes the entire history of measurements, and the random variable
$Y_{t+1}^{\mathcal{S}_{\ell_{t+1}}}$ denotes the (potential) set of
measurements at time $t+1$ by the set of satellite sensor nodes
($\mathcal{S}_{\ell_{t+1}}$) of a candidate leader node
$\ell_{t+1}$.

Williams et al. pointed out in \cite{Wil07} that the application of
the one-step mutual information criterion for sensor selection can
result in undesirable leader node bouncing (frequent, unnecessary
hand-off). To prevent this, Williams et al. proposed a finite-time
horizon dynamic program \cite{Wil07}. In our simulations we use a
simpler randomized algorithm to control the leader node exchange
rate. In this algorithm the current leader node flips a biased coin
with the probability of the flip outcome being 1 equal to $\lambda$.
If the outcome is~1 then the current leader node calculates the
mutual information criterion. It then
applies~\eqref{eqn:mutual_information} to determine if the current
particle representation should be transferred to a new leader node
that is more likely to make informative measurements. The leader
node exchange rule can then be represented as
\[
\Delta_{t+1} =
\cases{\displaystyle1 ,&\quad if $(u_{t} \geq\lambda)$ and $(\ell
_{t+1}\neq\ell_{t})$,\cr\displaystyle
0 ,&\quad otherwise,
}
\]
where $u_{t}$ is uniformly distributed in $[0,1]$. With this
approach, the computational load for each leader node is
significantly reduced, and the communication overhead can be
regulated by the choice of $\lambda$. However, the value of
$\lambda$ should be tailored depending on the application. In our
experiments we fix $\lambda= 1/5$. Note that equations for
$\Delta_{t+1}$ and $\ell_{t+1}$ define the structure of an example
sensor management rule $\Upsilon_{t}^{\ell_t}$, which was formulated
in a more general form in Section~\ref{sec:Introduction}.

We consider two leader node particle filtering algorithms, with one
employing nonparametric approximation (subsampling) and the other
using parametric approximation. To create a subsample for
transmission in the nonparametric framework we use the general
residual resampling scheme \cite{Douc05}. The parametric leader node
particle filter is implemented using the GML algorithm with
$N_{\rmp}$ components. Each component consists of a
two-dimensional Gaussian density with diagonal covariance matrix.
The mean vector and covariance matrix are estimated using the
particle representation available at the current leader node. To
implement the GML algorithm we used the standard MATLAB nonlinear
optimization routine \texttt{fmincon} (see \cite{Ore09} for details of
the implementation).

%
\begin{figure}

\includegraphics{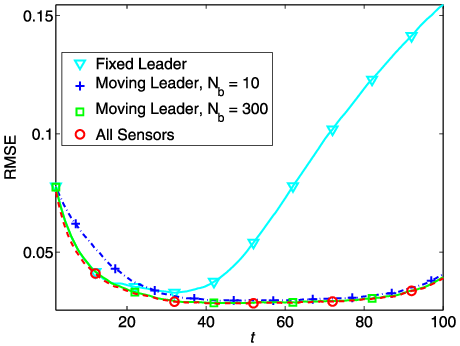}

\caption{Performance (RMSE) of different fusion schemes versus time:
$\bigtriangledown$ denotes the scheme with fixed leader node
selected at initialization; $+$ denotes the scheme with leader node
selected using approximate Mutual Information (MI) criterion and
nonparametric (subsampling) approximation with $N_{\rmb}=10$;
$\square$ denotes the scheme with leader node selected using
approximate MI criterion but no subsampling approximation
($N_{\rmb}=300$); and $\circ$ denotes the centralized scheme using
the entire set of measurements from all sensors.}
\label{fig:RMSE_fixed} 
\end{figure}

In the following we report the simulation results obtained using the
set-up discussed above. All results are achieved using 5,000 Monte
Carlo trials, and in each trial a new trajectory of the target is
generated.

Figure~\ref{fig:RMSE_fixed} depicts the performance in terms of Root
Mean Squared Error (RMSE) between the true position of the target
and its estimate using different information diffusion schemes. The
first scheme denoted by $\bigtriangledown$ corresponds to the
situation when the leader node is selected at the initialization and
is fixed throughout the tracking exercise. The second and third
schemes denoted by $+$ and $\square$ respectively correspond to
nonparametric leader node algorithms using $N_{\rmb}=10$ and
$N_{\rmb}=300$ particles for communications, respectively. The
fourth scheme denoted by $\circ$ corresponds to the centralized
scenario when all the measurements available from every sensor at
every time step $t$ are used to track the target. Note that the
baseline particle filter uses $N = 300$ particles (this value
was selected after experimentation with multiple values of $N$
because it provides sufficient accuracy without inducing unnecessary
computational overhead) in all scenarios (so
the $N_{\rmb}=300$ case corresponds to no subsampling). We can
see from Figure~\ref{fig:RMSE_fixed} that the centralized scheme is
only marginally better than the leader node scenario without
compression ($N=N_{\rmb}=300$). This confirms that our leader
node selection based on the approximate mutual information is a
valid approach.

The leader node particle filter that uses a very small number of
transmitted particles ($N_{\rmb}=10$) performs comparably
well. This suggests that there are practical scenarios where a
particle filter can incorporate aggressive approximation to reduce
communication overhead without incurring a~significant penalty in
tracking accuracy. The fixed leader node approach performs poorly
because the activated sensors only provide useful information when
the target is nearby.

In the next set of results, we explore the approximation error, that is,
the error induced by both sampling and the additional
parametric/subsampling approximations. The RMSE combines both
approximation error and estimation error resulting from the
inaccuracy and/or ambiguity of the measurement information. We can
estimate a \textit{Root Mean Squared Approximation Error} (RMSAE) by
calculating the error between a candidate particle filter and an
``ideal'' reference particle filter. As our reference filter, we
employ a~particle filter that uses $N=3\mbox{,}000$ particles, with no
approximation during hand-off. For each of the 5,000 Monte Carlo
trials, we apply this reference filter to generate location
estimates. The approximation error for our test filters is measured
relative to these estimates rather than the true locations.

Figure~\ref{fig:ratio_BP} depicts how the approximation performance
is affected as the number of particles in the subsampling step
($N_{\rmb}$) changes and the number of components in the
mixture model ($N_\mathrm{p}$) is varied. The performance is measured in
terms of the RMSAE increase relative to a leader node particle
filter that performs no additional approximation.

%
\begin{figure*}[t]
\centering
\begin{tabular}{@{}c@{\hspace*{10pt}}c@{}}

\includegraphics{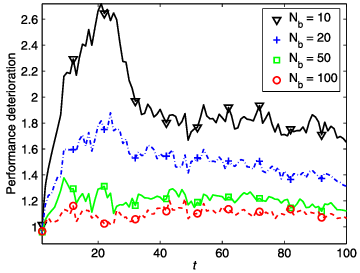}
&{\includegraphics{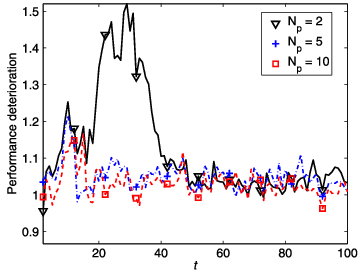}}\\
(a) &(b)
\end{tabular}
\caption{Deterioration of performance as a function of (\textup{a}) varying
number of transmitted particles and (\textup{b}) varying number of
transmitted mixture components for the leader node particle filter.
The performance deterioration is measured as the ratio of the RMSAE
of a leader node particle filter (with subsampling or parametric
approximation) to that of a~leader node particle filter with no
approximation ($N_{\mathrm{b}}=300$).
(\textup{a}) Nonparametric leader node particle filter.
(\textup{b}) Parametric leader node particle
filter.}
\label{fig:ratio_BP} 
\end{figure*}

Figure~\ref{fig:ratio_BP} indicates that the performance of the leader
node particle filter has interesting dynamic structure. In
particular, in the time period $t \in[1, 50]$ we can see an
articulated transient behavior [see Figure~\ref{fig:ratio_BP}(a),
$N_{\rmb}=10$ in particular]. The transient in these curves
arises because the particle representation of the target location
density is initially highly dispersed and multi-modal. However, as
time progresses ($t \in[51, 100]$) the particle representation of
the target becomes more localized and closer to unimodal, so
approximation performance improves  significantly. Qualitatively, the
performance deteriorates gracefully with respect to the extent of
the compression during hand-off (reduction in number of particles or
components), as theoretically predicted in the previous sections.

For the final performance analysis, we define a \textit{compression
factor} as the ratio of the number of particles used during regular
particle filter computations to the number of \textit{values}
transmitted during the hand-off. For the subsample approximation
case, this is simply $N/N_{\rmb}$. In our case of a Gaussian
mixture, variance information is transmitted in addition to the
locations of the Gaussians and the mixture weights, so the factor is
$2N/5N_{\rmp}$. Figure~\ref{fig:boxplot} presents a box-plot
depicting performance deterioration (ratio of approximation error of
the leader node with $N_\mathrm{b}<N$ and the leader node with $N_\mathrm{b}=N$)
versus the compression factor. Both the median and the maximal
deviations of the performance deterioration scale smoothly with
changing compression factor. Parametric approximation clearly
outperforms subsampling.

For the subsampling case, Corollary~\ref{cor:Lp_time_uni_tighter}
provides an analytical bound on the expected approximation error.
The curve based on this result [depicting the factor
$ (q_{\rmu} \chi+ (1-q_{\rmu}) )^{1/2}$;
experimentally measured $q_{\rmu}$ never exceeds $\lambda/2$]
is shown in Figure~\ref{fig:boxplot}(a) and provides a meaningful
characterization of the expected performance deterioration. For
comparison purposes, we include a~bound derived based on a simple
worst-case assumption that the subsample approximation particle
filter performs only as well as a particle filter that uses
$N_{\mathrm{b}}$ particles at all times. The bound developed in this
paper clearly provides a better indication of the performance
deterioration.

\section{Related work}
\label{sec:related}

In \cite{Kun71} Kunita studied the asymptotic behavior of the error
and stability of the filter that has an ergodic signal transition
semigroup with respect to the initial distribution. Ocone and
Pardoux \cite{Oco96} addressed the stability of linear filters with
respect to a non-Gaussian initial condition and examined the
stability of nonlinear filters in the case where the signal
diffusion is convergent. Although interesting, the results
in \cite{Kun71,Oco96} address the optimal filtering scenario, and
more relevant to our study is the analysis of approximately optimal
filters (especially particle filters). Important results concerning
the stability of particle filters have been developed over the past
decade \cite{Cri99,Mor01,Mor02,DelMor04,LeG98,Leg03,Gla04,Douc05}.

%
\begin{figure*}
\centering
\begin{tabular}{@{}c@{\hspace*{10pt}}c@{}}

\includegraphics{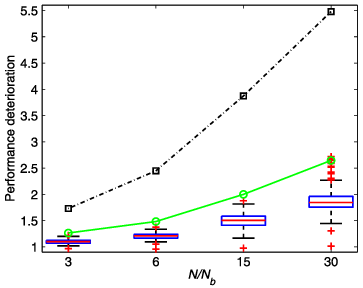}
&{\includegraphics{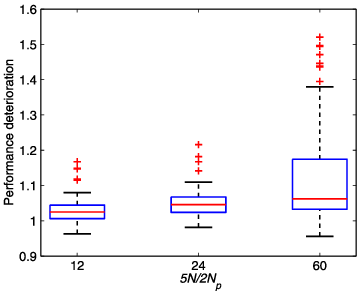}}\\
(a)
&(b)
\end{tabular}\vspace*{-3pt}
\caption{The relationship between deterioration of
approximation performance and compression factor. The performance
deterioration is measured as the ratio of the RMSAE of a~leader node
particle filter (subsampling or parametric approximation) to that of a~%
leader node particle filter
with no approximation ($N_{\mathrm{b}}=300$). The compression factor
is the ratio of $N$ to the
number of values transmitted during leader node exchange. The boxes
show lower quartile, median and upper quartile. Whiskers depict 1.5
times the
interquartile range, and the $+$ values denote outliers.
(\textup{a}) Nonparametric leader node particle filter.
$\square$ denotes the naive performance deterioration
characterization, $\sqrt{N/N_{\rmb}}$. $\circ$ denotes the
proposed characterization captured by
Corollary~\protect\ref{cor:Lp_time_uni_tighter}.
(\textup{b}) Parametric leader node particle filter.}
\label{fig:boxplot} 
\end{figure*}

The Feynman--Kac semigroup approach to the stability analysis of
particle filters has been described and developed by Del Moral, Miclo
and Guionnet in \cite{Mor01,Mor02,DelMor04}. The authors study the
stability properties of general nonlinear Feynman--Kac semigroups
under a variety of assumptions. The Dobrushin contraction coefficient
of the underlying Markov chain plays a central role in the
analysis. In \cite{Mor02}, Del~Moral and Miclo formulate the
conditions for the exponential asymptotic stability of the Feynman--Kac
semigroup and bound the Lyapunov constant and Dobrushin
coefficient. One of the applications of these results is a
time-uniform upper bound on the error of interacting particle
systems. In \cite{DelMor04}, Del Moral provides an extensive analysis of
the properties of Feynman--Kac semigroups.\vadjust{\goodbreak} His analysis forms the basis
for our study in this paper, particularly in the case of the
subsampling approximation particle filter.

Stability analysis for particle filters is frequently built on
relatively strong assumptions about the mixing and ergodicity
properties of the underlying Markov transitions of the signal
(target state). There have been some efforts to relax these types of
assumptions. In \cite{Leg03,Gla04}, Le Gland and Oudjane study the
stability and convergence rates for particle filters using the
Hilbert projective metric. In \cite{Leg03}, they relax the signal
mixing assumptions by employing a specific, ``robust'' particle
filter architecture with truncated likelihood functions.

In the subsampling approximation particle filter analyzed in this
paper, the number of particles varies over time. Crisan et al.
examine the stability of branching and interacting particle systems
in \cite{Cri99}; in these systems the population size also varies
because at each time step a particle generates a~random number of
offspring. The properties of the resulting particle filter depend on
the initial number of particles. The variation in the number of
particles is clearly very different from that of the subsampling
approximation particle filter, so the results are not directly
applicable.

Thus far we have discussed previous work that has addressed particle
filter stability when the error arises due to the sampling
approximation. The sampling error is dependent on the resampling
schemes, and Douc et al. have provided theoretical results that
allow various resampling schemes to be compared \cite{Douc05}.

Le Gland et al. provide uniform convergence results for the
regularized particle filters \cite{LeG98,Gla04}. Although there is
some similarity to the parametric approximation particle filter we
analyze, the purpose of the approximation is very different. It is
not performed intermittently to reduce computation or communication
cost, but rather is performed every time step with a complex model
($N$ components). From an algorithmic standpoint, there are also
similarities with the Gaussian sum particle filter \cite{Kot03}, but
the theoretical analysis of this filter is less developed.

There has been some work addressing the analysis of the leader node
particle filter \cite{Ihl05}. Although simulation (and to some
extent, experimental) results indicate that instability effects are
rarely observed in the leader node particle filtering, prior to our
work, the theoretical bounds on estimation error for leader node
particle filtering using intermittent parametric approximation grow
exponentially over time \cite{Ihl05}.

\section{Concluding remarks}
\label{section::Concluding Remarks}

We have presented the analysis of the leader node particle filter
that performs intermittent approximation. Our main results have the
form of upper bounds on the expected $L_p$ approximation error of
the leader node particle filter that occasionally employs either
subsampling or parametric approximations of the filtering
distribution. Such approximation steps become necessary when
particle filters are deployed on resource-constrained platforms,
where the resource can be energy, memory or computational power. The
important conclusion of our analysis is that these approximation
steps do not induce instability, and moreover, the frequency of the
approximation steps significantly affects the extent of performance
degradation. If the approximation steps are rare, then the
compression can be significant (a subset of subsamples or a few
mixture components are used during leader node exchange), and the
error remains reasonable. Numerical experiments indicate that the
bound for the subsample approximation particle filter provides a
meaningful characterization of practical approximation performance.

\section{Proofs of theorems} \label{sec:proofs}

\mbox{}

\begin{pf*}{Proof of Theorem~\ref{th:Lp_time_uni}}
We begin by applying Minkowski's inequality to
(\ref{eqn:err_decomp}):\vspace*{-2pt}
\[
\mathbb{E} \{ |[\widehat\eta_t^{\prime} -
\eta_t^{\prime}](h_t) |^p \}^{ {1}/{p}} \leq
\sum_{i=0}^t \mathbb{E} \{ |
[\Phi_{i,t}^{\prime}(\widehat\eta_i^{\prime}) -
\Phi_{i,t}^{\prime}(\Phi_{i}^{\prime}(\widehat\eta_{i-1}^{\prime}))
](h_t) |^p \}^{ {1}/{p}},
\]
and then using~\eqref{eqn:dobrushin_bound},
\eqref{eqn:dobrushin_onestep_bound} and \eqref{eqn:G_osc_bound} we
have
\begin{eqnarray*} 
&&\sum_{i=0}^t \mathbb{E} \{ |
[\Phi_{i,t}^{\prime}(\widehat\eta_i^{\prime}) -
\Phi_{i,t}^{\prime}(\Phi_{i}^{\prime}(\widehat\eta_{i-1}^{\prime}))
](h_i) |^p \}^{ {1}/{p}}\\[-1pt]
 && \qquad \leq\frac{2 -
\epsilon_{\rmu}(M) \epsilon_{\rmu}^{m
K_{\rmu}}(G)}{\epsilon_{\rmu}(M)
\epsilon_{\rmu}^{m K_{\rmu}}(G)} \\[-1pt]
&& \qquad  \quad {}\times\sum_{i=0}^t \bigl( 1 -
\epsilon_{\rmu}^{2}(M) \epsilon_{\rmu}^{(m-1)
K_{\rmu}}(G) \bigr)^{\lfloor(t-i)/m \rfloor} \mathbb{E}
\{
| [ \widehat\eta_i^{\prime} -
\Phi_{i}^{\prime}(\widehat\eta_{i-1}^{\prime}) ](h_i)
|^{p} \}^{1/p}.
\end{eqnarray*}
Next we analyze each individual expectation under the sum above. In
particular, using the structure of the algorithm defined in
(\ref{eqn:distr_PF_NP}) and the definition of sampling operator
introduced in (\ref{eqn:SN}) we can rewrite the terms comprising the
sum in the following explicit way:
%
%
\begin{eqnarray} \label{eqn:expectation_explicit}
&&\mathbb{E} \{ | [\widehat\eta_i^{\prime} -
\Phi_{i}^{\prime}(\widehat\eta_{i-1}^{\prime}) ](h_i)
|^p
\}^{ {1}/{p}} \nonumber\\
&&\qquad= \mathbb{E} \{ |
[\Delta^{\prime}_{i}S^{N}\circ
S^{{N_{\rmb}}}(\Phi_{i}^{\prime}(\widehat\eta_{i-1}^{\prime}))
 +
(1-\Delta^{\prime}_{i})S^{N}(\Phi_{i}^{\prime}(\widehat\eta
_{i-1}^{\prime}))
\\
&&\hspace*{214pt}{}- \Phi_{i}^{\prime}(\widehat\eta_{i-1}^{\prime}) ](h_i)
|^{p} \}^{{1}/{p}} \nonumber.
\end{eqnarray}
Grouping the terms and using Minkowski's inequality again, we
conclude
\begin{eqnarray*}
&&\mathbb{E} \{ | [\widehat\eta_i^{\prime} -
\Phi_{i}^{\prime}(\widehat\eta_{i-1}^{\prime})
](h_i) |^p \}^{ {1}/{p}} \\
&& \qquad\leq\mathbb{E} \{ | \Delta^{\prime}_{i} [
S^{N}\circ
S^{{N_{\rmb}}}(\Phi_{i}^{\prime}(\widehat\eta_{i-1}^{\prime}))
- \Phi_{i}^{\prime}(\widehat\eta_{i-1}^{\prime}) ](h_i)
|^{p}
\}^{ {1}/{p}} \\
&& \qquad \quad{}+ \mathbb{E} \{ |(1-\Delta^{\prime}_{i})
[S^{N}(\Phi_{i}^{\prime}(\widehat\eta_{i-1}^{\prime})) -
\Phi_{i}^{\prime}(\widehat\eta_{i-1}^{\prime}) ](h_i)
|^{p} \}^{ {1}/{p}}.
\end{eqnarray*}
Adding and subtracting $\Delta^{\prime}_i
S^{{N_{\rmb}}}(\Phi_{i}^{\prime}(\widehat\eta_{i-1}^{\prime}))$
in the first term, we have
%
%
\begin{eqnarray} \label{eqn:ith_term_error_decomp}
&&\mathbb{E} \{ | [\widehat\eta_i^{\prime} -
\Phi_{i}^{\prime}(\widehat\eta_{i-1}^{\prime}) ](h_i)
|^p
\}^{ {1}/{p}} \nonumber\\
&& \qquad \leq\mathbb{E} \{ | \Delta^{\prime}_{i} [
S^{N}\circ
S^{{N_{\rmb}}}(\Phi_{i}^{\prime}(\widehat\eta_{i-1}^{\prime}))
-
S^{{N_{\rmb}}}(\Phi_{i}^{\prime}(\widehat\eta_{i-1}^{\prime}))
](h_i) |^{p}
\}^{ {1}/{p}} \nonumber
\\[-8pt]
\\[-8pt]
&& \qquad \quad{} + \mathbb{E} \{ | \Delta^{\prime}_{i} [
S^{{N_{\rmb}}}(\Phi_{i}^{\prime}(\widehat\eta_{i-1}^{\prime}))
- \Phi_{i}^{\prime}(\widehat\eta_{i-1}^{\prime})
](h_i) |^{p} \}^{ {1}/{p}} \nonumber\\
&& \qquad \quad{} + \mathbb{E} \{ |(1-\Delta^{\prime}_{i})
[S^{N}(\Phi_{i}^{\prime}(\widehat\eta_{i-1}^{\prime})) -
\Phi_{i}^{\prime}(\widehat\eta_{i-1}^{\prime}) ](h_i)
|^{p} \}^{ {1}/{p}} .
\nonumber
\end{eqnarray}
We see that each error term under the sum splits into three
individual terms, describing the approximation paths the leader node
algorithm can follow at time $i$. If $N = \chi N_{\rmb}$, then
the $N$-particle approximation after subsampling can be recovered
from the $N_{\rmb}$-particle approximation without error by
replicating the $N_{\rmb}$-particle approximation $\chi$
times. Thus the first term in (\ref{eqn:ith_term_error_decomp}) is
zero.

The analysis of the remaining two terms is similar. We first
concentrate on the second term. Recall that $(\ell_{i}^{\prime},
\Delta_{i}^{\prime}) =
\Upsilon_{i-1}^{\ell_{i-1}^{\prime}}(\Phi_{i}^{\prime}(\widehat
\eta_{i-1}^{\ell_{0\dvtx i-1}^{\prime}}),
\mathcal{I}_{i-1})$. Thus given the $\sigma$-algebra
$\mathcal{F}_{i-1}$ and the realization of the measurement taken by
leader node $\ell_{i-1}^{\prime}$,
$Y_{i-1}^{\mathcal{S}_{\ell_{i-1}^{\prime}}} =
y_{i-1}^{\mathcal{S}_{\ell_{i-1}^{\prime}}}$, the output of the
decision rule is independent of the sampling error,
$[S^N(\Phi_{i}^{\prime}(\widehat\eta_{i-1}^{\prime})) -
\Phi_{i}^{\prime}(\widehat\eta_{i-1}^{\prime})](h_i)$. We exploit
this Markovian nature of the decision rule and apply
Lemma~\ref{lem:2} to the conditional expectation rendering the
following bound:
%
%
\begin{eqnarray} \label{eqn:conditioning_argument}
&&\mathbb{E} \{ | \Delta^{\prime}_{i} [
S^{{N_{\rmb}}}(\Phi_{i}^{\prime}(\widehat\eta_{i-1}^{\prime}))
- \Phi_{i}^{\prime}(\widehat\eta_{i-1}^{\prime})
](h_i) |^{p} \}^{1/p} \nonumber\\
&& \qquad= \mathbb{E} \bigl\{ \Delta^{\prime}_{i} \mathbb{E} \{ |
[
S^{{N_{\rmb}}}(\Phi_{i}^{\prime}(\widehat\eta_{i-1}^{\prime}))
- \Phi_{i}^{\prime}(\widehat\eta_{i-1}^{\prime}) ](h_i)
|^{p} | \mathcal{F}_{i-1} ,
Y_{i-1}^{\mathcal{S}_{\ell_{i-1}^{\prime}}} =
y_{i-1}^{\mathcal{S}_{\ell_{i-1}^{\prime}}}
\} \bigr\}^{1/p} \\
&& \qquad\leq\frac{c^{1/p}(p)}{\sqrt{{N_{\rmb}}}} q_{i}^{1/p}.
\nonumber
\end{eqnarray}
Combining the analysis results for all three terms, we obtain
\[
\mathbb{E} \{ | [\widehat\eta_i^{\prime} -
\Phi_{i}^{\prime}(\widehat\eta_{i-1}^{\prime}) ](h_t)
|^p \}^{1/p} \leq c^{1/p}(p) \biggl( q_{i}^{1/p}
\frac{1}{\sqrt{{N_{\rmb}}}} + (1 - q_{i})^{1/p}
\frac{1}{\sqrt{N}} \biggr).
\]
We note that the expression in brackets has the form $\varphi(q_{i})
= q_{i}^{1/p} (\alpha+\beta) + (1 - q_{i})^{1/p} \alpha$ for some
$\beta> \alpha\geq0$. For $p \geq1$, $\varphi(q_{i})$ has
maximum at $q_{i} = q_{\max}$,
\[
q_{\max} =
\frac{1}{1 + [ ({\alpha+\beta})/{\alpha} ]^{p/(1-p)}}.
\]
We have that $\varphi(q_{i})$ is nondecreasing on $q_{i} \in[0,
q_{\max}]$ and nonincreasing on $q_{i} \in
(q_{\max}, 1]$. Noting that
$ [(\alpha+\beta)/\alpha]^{p/(1-p)}$ is increasing in $p$
we obtain
\[
q_{\max} \geq\frac{1}{1 + [{\alpha}/({\alpha
+\beta}) ]}
\geq\inf_{\beta\dvtx \beta> \alpha} \frac{1}{1 +
[ {\alpha}/({\alpha+\beta}) ]} = 2/3.
\]
Thus if
$q_{\rmu} \leq2/3 \leq q_{\max}$, then for any $i \geq
0$ we have
\[
\mathbb{E} \{ | [\widehat\eta_i^{\prime} -
\Phi_{i}^{\prime}(\widehat\eta_{i-1}^{\prime}) ](h_t)
|^p \}^{1/p} \leq c^{1/p}(p) \biggl( q_{\rmu}^{1/p}
\frac{1}{\sqrt{{N_{\rmb}}}} + (1 - q_{\rmu})^{1/p}
\frac{1}{\sqrt{N}} \biggr).
\]
Finally, noting \cite{DelMor04} that
%
%
\begin{equation} \label{eqn:const_sum_bound}
\sum_{i=0}^t \bigl( 1 - \epsilon_{\rmu}^{2}(M)
\epsilon_{\rmu}^{(m-1) K_{\rmu}}(G) \bigr)^{\lfloor
(t-i)/m \rfloor} \leq\frac{m}{\epsilon_{\rmu}^{2}(M)
\epsilon_{\rmu}^{(m-1) K_{\rmu}}(G)},
\end{equation}
we complete the proof of theorem.
\end{pf*}

\begin{pf*}{Proof of Corollary \ref{cor:Lp_time_uni}}
The corollary follows by allowing for sampling error to arise in the
first term in
(\ref{eqn:ith_term_error_decomp}):
\[
\mathbb{E} \{ | \Delta^{\prime}_{i} [ S^{N}\circ
S^{{N_{\rmb}}}(\Phi_{i}^{\prime}(\widehat\eta_{i-1}^{\prime}))
-
S^{{N_{\rmb}}}(\Phi_{i}^{\prime}(\widehat\eta_{i-1}^{\prime}))
](h_i) |^{p} \}^{ {1}/{p}} \leq
\frac{c^{1/p}(p)}{\sqrt{N}} q_{i}^{1/p}
\]
and incorporating this error bound throughout the rest of the proof of
Theorem~\ref{th:Lp_time_uni}.
\end{pf*}

\begin{pf*}{Proof of Corollary \ref{cor:Lp_time_uni_tighter}}
Starting with (\ref{eqn:expectation_explicit}), we perform a
different error decomposition expanding the power. We observe that
$\Delta^{\prime}_{i}(1-\Delta^{\prime}_{i}) = 0$ and that if $N =
\chi N_{\rmb}$ for integer $\chi$, we can reconstruct an
$N$-sample representation from the $N_{\rmb}$ sample with no
additional error. Thus
\begin{eqnarray*}
&&\mathbb{E} \{ | [\widehat\eta_{i}^{\prime} -
\Phi_{i}^{\prime}(\widehat\eta_{i-1}^{\prime}) ](h_i)
|^p \}^{ {1}/{p}}\\ && \qquad   \leq\mathbb{E} \{
\Delta^{\prime}_{i}
| [S^{{N_{\rmb}}}(\Phi_{i}^{\prime}(\widehat\eta
_{i-1}^{\prime}))
-
\Phi_{i}^{\prime}(\widehat\eta_{i-1}^{\prime}) ](h_i)
|^p \\
&&\hphantom{\mathbb{E} \{} \qquad  \quad {}+ (1-\Delta^{\prime}_{i})
| [S^{N}(\Phi_{i}^{\prime}(\widehat\eta_{i-1}^{\prime
})) -
\Phi_{i}^{\prime}(\widehat\eta_{i-1}^{\prime}) ](h_i)
|^{p} \}^{1/p} .
\end{eqnarray*}
Applying the same conditioning as in
\eqref{eqn:conditioning_argument} and utilizing Lemma~\ref{lem:2},
%
%
\begin{equation}
\mathbb{E} \{ | [\widehat\eta_{i}^{\prime} -
\Phi_{i}^{\prime}(\widehat\eta_{i-1}^{\prime}) ](h_i)
|^p \}^{ {1}/{p}} \leq
\frac{c(p)^{1/p}}{\sqrt{N}} \bigl( q_i \chi^{p/2} + (1-q_i)
\bigr)^{1/p}.
\end{equation}
We note that $\chi\geq1$ and $q_i \chi+ (1-q_i) \leq q_{\rmu}
\chi+ (1-q_{\rmu})$ under the assumption $q_i \leq
q_{\rmu}$. The final step in the proof involves applying
(\ref{eqn:const_sum_bound}) as in the proof of
Theorem~\ref{th:Lp_time_uni}.
\end{pf*}

\begin{pf*}{Proof of Theorem~\ref{th:Exp_time_uni}}
Using the triangle inequality in (\ref{eqn:err_decomp}), following
the methodology presented in Theorem~\ref{th:Lp_time_uni} and
denoting $\omega_i = ( 1 - \epsilon_{\rmu}^{2}(M)\times\break
\epsilon_{\rmu}^{(m-1) }(G) )^{\lfloor(t-i)/m \rfloor}$
and $a = \frac{2 - \epsilon_{\rmu}(M) \epsilon_{\rmu}^{m
}(G)}{\epsilon_{\rmu}(M)
\epsilon_{\rmu}^{m }(G)}$ we have
\[
|[\widehat\eta_t^{\prime} - \eta_t^{\prime}](h_t) |
\leq
a \sum_{i=0}^t \omega_i | [\widehat\eta
_i^{\prime}
- \Phi_{i}^{\prime}(\widehat\eta_{i-1}^{\prime}) ](h_i)
|.
\]
Using the structure of the algorithm defined in
(\ref{eqn:distr_PF_NP}) and the definition of sampling operator\vadjust{\goodbreak}
introduced in (\ref{eqn:SN}), we obtain the following (similarly to
Theorem~\ref{th:Lp_time_uni}):
\[
|[\widehat\eta_t^{\prime} - \eta_t^{\prime}](h_t) |
\leq
Z_1 + Z_2,
\]
where
\begin{eqnarray*}
Z_1 &=& a \sum_{i=0}^t \omega_i \Delta^{\prime}_i |
[S^{N}\circ
S^{{N_{\rmb}}}(\Phi_{i}^{\prime}(\widehat\eta_{i-1}^{\prime}))
-
S^{{N_{\rmb}}}(\Phi_{i}^{\prime}(\widehat\eta_{i-1}^{\prime
})) ](h_i) | \\
&&{}+ a \sum_{i=0}^t \omega_i (1-\Delta^{\prime}_i) |
[S^{N}(\Phi_{i}^{\prime}(\widehat\eta_{i-1}^{\prime})) -
\Phi_{i}^{\prime}(\widehat\eta_{i-1}^{\prime}) ](h_i)
|,
\\
Z_2 &=& a\sum_{i=0}^t \omega_i \Delta^{\prime}_i |
[S^{{N_{\rmb}}}(\Phi_{i}^{\prime}(\widehat\eta
_{i-1}^{\prime}))
- \Phi_{i}^{\prime}(\widehat\eta_{i-1}^{\prime}) ](h_i)
|.
\end{eqnarray*}
Noting that
\[
\sup_{t \geq0} \mathbb{P} \{ |[\widehat\eta
_t^{\prime}
- \eta_t^{\prime}](h_t)| \geq\epsilon\} \leq\sup_{t
\geq0} \mathbb{P} \{ Z_1 + Z_2 \geq\epsilon\}
\]
and using the fact that for any couple of random variables $Z_1,Z_2$
we have $(Z_1 + Z_2 \geq1) \rightarrow((Z_1 \geq1/2) \mbox{ or }
(Z_2 \geq1/2))$ and thus $\bbP\{ X + Y \geq\varepsilon\} \leq
\bbP\{ X \geq\varepsilon/2 \} + \bbP\{ Y \geq\varepsilon/2 \}$, we
have
\[
\sup_{t \geq0} \mathbb{P} \{ |[\widehat\eta
_t^{\prime}
- \eta_t^{\prime}](h_t)| \geq\epsilon\} \leq\sup_{t
\geq0} \mathbb{P} \{ Z_1 \geq\epsilon/2 \} +
\sup_{t \geq0} \mathbb{P} \{ Z_2 \geq\epsilon/2
\}.
\]
Now applying Markov's inequality we conclude
\[
\sup_{t \geq0} \mathbb{P} \{ |[\widehat\eta
_t^{\prime}
- \eta_t^{\prime}](h_t)| \geq\epsilon\} \leq\sup_{t
\geq0} e^{-\tau_1\epsilon/2}\mathbb{E} \{ e^{\tau_1
Z_1} \} + \sup_{t \geq0} e^{-\tau_2\epsilon/2}
\mathbb{E} \{ e^{\tau_2 Z_2} \}.
\]
Next we apply the exponential series expansion,
%
%
\begin{equation} \label{eqn:exp_series_expansion}
\mathbb{E} \{ e^{\tau_1 Z_1} \} = \sum_{n\geq0}
\frac{\tau_1^n}{n!} \mathbb{E}\{Z_1^n\},
\end{equation}
and use the fact that according to the following conditioning
argument and Lemma~\ref{lem:2}, we have
\begin{eqnarray*}
\mathbb{E}\{Z_1^n\}^{1/n} &=& ( \bbE\{Z_1^n |\Delta^{\prime}_i =
1\}\bbP\{ \Delta^{\prime}_i = 1 \} + \bbE\{Z_1^n |\Delta^{\prime}_i
=
0\}\bbP\{ \Delta^{\prime}_i = 0 \} )^{1/n} \\
&\leq& a \sum_{i=0}^t \omega_i \bigl( q_i c(n) N^{-n/2} + (1-q_i)
c(n) N^{-n/2}\bigr)^{1/n} = \frac{c^{1/n}(n)}{\sqrt{N}}
a\sum_{i=0}^t \omega_i.
\end{eqnarray*}
Noting that $a \sum_{i=0}^t \omega_i \leq
\epsilon_{\rmu,m}$ we have that $\mathbb{E}\{Z_1^n\} \leq
\epsilon_{\rmu,m}^n c(n) N^{-n/2}$. Substituting this into
(\ref{eqn:exp_series_expansion}) and employing the same
simplifications as in the proofs of Theorem~\ref{th:3} and
Corollary~\ref{cor:simple_MGF_bound} we obtain
\[
e^{- \varepsilon\tau_1/2}\mathbb{E} \{ e^{\tau_1 Z_1} \}
\leq\biggl(1 + \sqrt{2\pi}\frac{\tau_1
\epsilon_{\rmu,m}}{\sqrt{N}} \biggr)
e^{ {\tau_1^2\epsilon_{\rmu,m}^2}/({8N}) - \varepsilon\tau_1/2}.
\]
Applying similar analysis to
$e^{-\varepsilon\tau_2/2}\mathbb{E} \{ e^{\tau_2 Z_2} \}$
and choosing\vspace*{-3pt} $\tau_1 = \frac{2\varepsilon
N}{\epsilon_{\rmu,m}^2}$ and $\tau_2 =
\frac{2N_{\rmb}\varepsilon}{\epsilon_{\rmu,m}^2}$ completes
the proof.
\end{pf*}

\begin{pf*}{Proof of Theorem~\ref{th:DLp}}
Using Pinsker's inequality, $\int|f\,{-}\,g|\,{\leq}\,\sqrt{2 D(f\|g)}$,
\cite{Dev00} we have
\[
\mathbb{E} \{ |[\widehat{\mathcal{G}}^{N_{\rmp}} - F](h)
|^p \}^{1/p} \leq\sqrt{2} [ \mathbb{E} \{ D(f \|
\widehat g^{N_{\rmp}}) ^{p/2} \}^{2/p} ]^{1/2}.
\]
Now, suppose $p \geq2$. The following decomposition can be used to
analyze the previous expression:
\[
D(f \| \widehat g^{N_{\rmp}}) = D(f \| \widehat
g^{N_{\rmp}}) - D(f \| \mathcal{C}) + D(f \| \mathcal{C}).
\]
Denoting $g^{*} = \arg\min_{g \in\mathcal{C}} D(f \| g)$ we have
the following modification of the decomposition proposed by Rakhlin et
al.
in \cite{Rak05}:
\begin{eqnarray*}
D(f \| \widehat g^{N_{\rmp}}) - D(f \| \mathcal{C}) &=& -\int
\log\widehat g^{N_{\rmp}}(x) F(\mathrm{d}x) +
\frac{1}{N}\sum_{i=1}^N \log\widehat
g^{N_{\rmp}}(\xi_i)
\\
&&{}+ \frac{1}{N}\sum_{i=1}^N \log g^{*}(\xi_i) -
\frac{1}{N}\sum_{i=1}^N \log\widehat g^{N_{\rmp}}(\xi
_i) \\
&&{}+ \int\log g^{*}(x) F(\mathrm{d}x) -
\frac{1}{N}\sum_{i=1}^N \log g^{*}(\xi_i).
\end{eqnarray*}
Applying (\ref{eqn:li_barron_main}) to the middle term we see
\begin{eqnarray*}
&&D(f \| \widehat g^{N_{\rmp}}) - D(f \| \mathcal{C}) \\
&& \qquad \leq|[F
- S^N(F)](\log\widehat g^{N_{\rmp}})| + |[F - S^N(F)](\log
g^{*})| + \frac{\gamma c^2_{f,\mathcal{C}}}{{N_{\rmp}}}.
\end{eqnarray*}
By the definition of $D(f \| \mathcal{C})$ it follows that $D(f \|
\widehat g^{N_{\rmp}}) - D(f \| \mathcal{C}) \geq0$, and thus
we conclude
\[
| D(f \| \widehat g^{N_{\rmp}}) - D(f \| \mathcal{C})
| \leq2 \sup_{g \in\mathcal{C}} |[F -
S^N(F)] (\log g ) | + \frac{\gamma
c^2_{f,\mathcal{C}}}{{N_{\rmp}}}.
\]
This allows us to split the effect of approximation and sampling
errors by applying Minkowski's inequality (since $p \geq2$),
\begin{eqnarray*}
\mathbb{E} \{ D(f \| \widehat g^{N_{\rmp}}) ^{p/2}
\}^{2/p} &\leq&2\mathbb{E} \Bigl\{ \Bigl[\sup_{g \in
\mathcal{C}} |[F - S^N(F)] (\log g ) |
\Bigr]^{p/2} \Bigr\}^{2/p}\\
&&{} + \frac{\gamma
c^2_{f,\mathcal{C}}}{{N_{\rmp}}} + D(f \| \mathcal{C}).
\end{eqnarray*}

The next step of the proof makes use of a symmetrization argument.
We recall the definition of the Rademacher sequence
$(\varepsilon_k)$ as a sequence of independent random variables
taking values in $\{-1, +1\}$ with $\bbP\{\varepsilon_k = 1\} =
\bbP\{\varepsilon_k = -1\} = 1/2$. Denote by $S_{\varepsilon}^{N}$
the generator of the signed Rademacher measure (with $\xi_k$ being
the samples from $\mu$)
\[
S_{\varepsilon}^{N}(\mu)(h) = \frac{1}{N}\sum_{k=1}^N \varepsilon_k
h(\xi_k).
\]

Using the symmetrization lemma (see, e.g., Lemma~2.3.1 in \cite{Vaa96}
or Lem\-ma~6.3 in \cite{Led91}), we deduce
\[
\mathbb{E} \Bigl\{ \Bigl[\sup_{g \in\mathcal{C}} |[F -
S^N(F)] (\log g ) | \Bigr]^{p/2} \Bigr\}^{2/p}
\leq
2 \mathbb{E}\Bigl \{\Bigl [\sup_{g \in\mathcal{C}}
|S^N_{\varepsilon}(F) (\log g ) | \Bigr]^{p/2}
\Bigr\}^{2/p}.
\]
Denoting $\kappa(x)\,{=}\,g(x)\,{-}\,1$ and using the fact \cite{Rak06} that
\mbox{$\varphi(\kappa(x))\,{=}\,a \log(\kappa(x)\,{+}\,1)$} is a
contraction,\footnote{The function $\varphi\dvtx \mathbb{R} \rightarrow
\mathbb{R}$ is a contraction if we have $|\varphi(x) - \varphi(y)|
\leq|x-y|, \forall x,y \in E$.} we apply the comparison inequality
(Theorem~4.12 in~\cite{Led91}), observing that $[\cdot]^{p/2}$ is
convex and increasing for $p \geq2$, and $\kappa$ is a~bounded
function
\begin{eqnarray*}
\mathbb{E} \Bigl\{ \Bigl[\sup_{g \in\mathcal{C}}
|S^N_{\varepsilon}(F) (\log g ) | \Bigr]^{p/2}
\Bigr\}^{2/p} &\leq&\frac{2}{a}\mathbb{E} \Bigl\{ \Bigl[
\sup_{g\in\mathcal{C}}|S^N_{\varepsilon}(F)(g)| \Bigr]^{p/2}
\Bigr\}^{2/p}\\
&&{}+ \frac{2}{a}\mathbb{E} \{ |S^N_{\varepsilon}(F)(1)|^{p/2}
\}^{2/p}.
\end{eqnarray*}
Applying the same technique used to prove Lemma~\ref{lem:2} we have
\[
\mathbb{E} \{ |S^N_{\varepsilon}(F)(1)|^{p/2} \}^{2/p} =
\mathbb{E} \Biggl\{ \Biggl|\frac{1}{N}\sum_{i=1}^N
\varepsilon_{i} \Biggr|^{p/2} \Biggr\}^{2/p} \leq\frac{2
c^{2/p}(p/2)}{\sqrt{N}}.
\]
On the other hand, using the representation of $g \in\mathcal{C}$
and exchanging the order of integration and summation
\begin{eqnarray*}
|S^N_{\varepsilon}(F)(g)| &=& \Biggl| \frac{1}{N} \sum_{i=1}^N
\varepsilon_i \int_{\theta\in\Theta} \phi_{\theta}(\xi_i) \bbP
(\rd\theta) \Biggr| \\
&\leq&\sup_{\theta\in\Theta} \Biggl| \frac{1}{N}
\sum_{i=1}^N
\varepsilon_i\phi_{\theta}(\xi_i) \Biggr|,
\end{eqnarray*}
and we conclude
\[
\mathbb{E} \Bigl\{
\Bigl[\sup_{g\in\mathcal{C}}|S^N_{\varepsilon}(F)(g)| \Bigr]^{p/2}
\Bigr\}^{2/p} \leq\mathbb{E} \Bigl\{
\Bigl[\sup_{g\in\mathcal{H}}|S^N_{\varepsilon}(F)(g)| \Bigr]^{p/2}
\Bigr\}^{2/p}.
\]

The Orlicz norm \cite{Vaa96,DelMor04} $\pi_{\psi_{p}}(Y)$ of a
random variable $Y$ is defined, for a~nondecreasing convex function
$\psi_{p}(x) = e^{x^p} - 1$, as
\[
\pi_{\psi_{p}}(Y) = \inf\bigl\{C > 0 \dvtx \bbE\{ \psi_{p}(|Y| / C) \} \leq
1 \bigr\}.
\]
By Hoeffding's inequality the Rademacher process
$S^N_{\varepsilon}(F)(g)$ is sub-Gaussian for the semimetric
$d_N$ \cite{Vaa96}. Using the fact that $\bbE\{ X^p \}^{1/p} \leq
(p/2)!\pi_{\psi_{2}}(X)$ (see, e.g., Lemma~7.3.5 in \cite{DelMor04}
or \cite{Vaa96}, page~105, Problem~4), we deduce
\[
\bbE\bbE_{\varepsilon}\Bigl \{ \Bigl[\sup_{g\in\mathcal
{H}}|S^N_{\varepsilon}(F)(g)| \Bigr]^{p/2}
\Bigr\}^{2/p} \leq(p/4)!
\bbE\pi_{\psi_{2}}\Bigl(\sup_{g\in\mathcal{H}}|S^N_{\varepsilon}(F)(g)|\Bigr).
\]
In addition, since $S^N_{\varepsilon}(F)(g)$ is sub-Gaussian, we
have for some universal constant $C$ (see proof of Corollary~2.2.8
in \cite{Vaa96})
\[
\bbE\pi_{\psi_{2}}\Bigl(\sup_{g\in\mathcal{H}}|S^N_{\varepsilon}(F)(g)|\Bigr)
\leq\frac{C}{\sqrt{N}}\bbE\int_{0}^b \sqrt{\log\bigl(1 +
\mathcal{D}(\varepsilon, \mathcal{H}, d_N) \bigr)} \,\rd\varepsilon.
\]
Combining the above we have
\begin{eqnarray*}
&&\mathbb{E} \{ |[\widehat{\mathcal{G}}^{N_{\rmp}} - F](h)
|^{p} \}^{1/p} \\&& \qquad \leq\sqrt{2}\biggl [\frac{8}{a\sqrt{N}} \biggl(2
c^{2/p}(p/2)   + (p/4)!C \bbE\int_{0}^b \sqrt{\log
\bigl(1 +
\mathcal{D}(\varepsilon, \mathcal{H}, d_N) \bigr)} \,\rd\varepsilon
\biggr)\\
&&\hspace*{244pt}{} + \frac{\gamma c^2_{f,\mathcal{C}}}{{N_{\rmp}}} + D(f
\| \mathcal{C}) \biggr]^{1/2}.
\end{eqnarray*}
Finally, suppose $1 \leq p < 2$. In this case using Jensen's
inequality we have
\[
\mathbb{E} \{ D(f \| \widehat g^{N_{\rmp}}) ^{p/2}
\}^{2/p} \leq\mathbb{E} \{ D(f \| \widehat
g^{N_{\rmp}}) \}.
\]
Thus the above analysis applies if we choose $p=2$, and the proof is
now complete.
\end{pf*}

\begin{pf*}{Proof of Theorem~\ref{th:Lp_param_time_uni}}
Using the same argument as in Theorem~\ref{th:Lp_time_uni} we have
\begin{eqnarray*}
&&\mathbb{E} \{ |[\widehat\eta_t^{\prime} - \eta_t^{\prime
}](h_t)|^p \}^{1/p} \\
&& \qquad\leq\frac{2 - \epsilon_{\rmu}(M) \epsilon_{\rmu}^{m
K_{\rmu}}(G)}{\epsilon_{\rmu}(M) \epsilon_{\rmu}^{m
K_{\rmu}}(G)} \sum_{i=0}^t \beta_{i,t}(P)
\mathbb{E} \{ | [ \widehat\eta_i^{\prime} -
\Phi_{i}^{\prime}(\widehat\eta_{i-1}^{\prime}) ](h_i)
|^{p} \}^{1/p}.
\end{eqnarray*}
Based on~\eqref{eqn:distr_PF_PA},
$\widehat{\mathcal{G}}_{i+1}^{N_{\rmp}} =
\mathbb{W}_{N_{\rmp}} \circ S^N
(\Phi_{i+1}^{\prime}(\widehat\eta_{i}^{\ell_{0,i}^{\prime}}))$ and
Minkowski inequality we have the decomposition for each individual
expectation under the sum above:
\begin{eqnarray*}
&&\mathbb{E} \{ | [\widehat\eta_i^{\prime} -
\Phi_{i}^{\prime}(\widehat\eta_{i-1}^{\prime}) ](h_i)
|^p
\}^{ {1}/{p}} \\
&& \qquad\leq\mathbb{E} \{ | \Delta^{\prime}_{i} [
S^{N}(\widehat{\mathcal{G}_i}^{N_{\rmp}}) -
\widehat{\mathcal{G}_i}^{N_{\rmp}} ](h_i)
\\
&&\hphantom{\mathbb{E} \{} \qquad \quad{}+ (1-\Delta^{\prime}_{i})
[S^{N}(\Phi_{i}^{\prime}(\widehat\eta_{i-1}^{\prime})) -
\Phi_{i}^{\prime}(\widehat\eta_{i-1}^{\prime})
](h_i) |^{p} \}^{ {1}/{p}}\\
&& \qquad \quad{}+ \mathbb{E} \{ | \Delta^{\prime}_{i} [
\widehat{\mathcal{G}_i}^{N_{\rmp}} -
\Phi_{i}^{\prime}(\widehat\eta_{i-1}^{\prime}) ](h_i)
|^{p} \}^{{1}/{p}}.
\end{eqnarray*}
Using the same conditioning argument as in
Theorem~\ref{th:Lp_time_uni} and applying Corollary~\ref{cor:DLp}
based on the assumption $(\mathcal{H})_{\rmu}$ to the last
term we have
\begin{eqnarray*}
&&\mathbb{E} \{ | \Delta^{\prime}_{i} [
\widehat{\mathcal{G}_i}^{N_{\rmp}} -
\Phi_{i}^{\prime}(\widehat\eta_{i-1}^{\prime})
](h_i) |^{p} \}^{1/p} \\
&& \qquad= \mathbb{E}\bigl \{ \mathbb{E} \{ \Delta^{\prime}_{i} |
[ \widehat{\mathcal{G}_i}^{N_{\rmp}} -
\Phi_{i}^{\prime}(\widehat\eta_{i-1}^{\prime}) ](h_i)
|^{p} | \mathcal{F}_{i-1} ,
Y_{i-1}^{\mathcal{S}_{\ell_{i-1}}} =
y_{i-1}^{\mathcal{S}_{\ell_{i-1}}}
\} \bigr\}^{1/p} \\
&& \qquad\leq q_{i}^{1/p}\sqrt{2} \biggl[\frac{8}{a_{i}\sqrt{N}} \biggl(2
c^{2/p}(p/2) \\
&&\hphantom{q_{i}^{1/p}\sqrt{2} \biggl[\frac{8}{a_{i}\sqrt{N}} \biggl(} \qquad  \quad
{}+ (p/4)!C \bbE\int_{0}^{b_{i}} \sqrt{\log(1 +
\mathcal{D}(\varepsilon, \mathcal{H}_i, d_N) )} \,\rd
\varepsilon
\biggr) \\
&&\hspace*{148.5pt}  \qquad\quad{}+ 4\log\bigl(3\sqrt{e}(b_{i}/a_{i})\bigr)
\frac{(b_{i}/a_{i})^2}{{N_{\rmp}}} \biggr]^{1/2}.
\end{eqnarray*}
Next we apply Lemma~\ref{lem:2} and the same conditioning argument
as in Theorem~\ref{th:Exp_time_uni} to the remaining term and
conclude that since $q_{i} \leq q_{\rmu}$, then for any $i \geq
0$ we have the time-uniform estimate
\begin{eqnarray*}
&&\mathbb{E} \{ | [\widehat\eta_i^{\prime} -
\Phi_{i}^{\prime}(\widehat\eta_{i-1}^{\prime}) ](h_i)
|^p \}^{1/p}\\
&& \qquad \leq\frac{c^{1/p}(p)}{\sqrt{N}}\\
&& \qquad  \quad {}  + q_{\rmu}^{1/p}
\sqrt{2} \\
&& \qquad  \quad\hphantom{{}  +} {}\times\biggl[\frac{8}{a_{\rmu}\sqrt{N}} \biggl(2
c^{2/p}(p/2)\\
&& \hphantom{{}  + {}\times\biggl[\frac{8}{a_{\rmu}\sqrt{N}} \biggl(}
 \qquad  \quad {} +
(p/4)!C \sup_{i\ge0}\bbE\int_{0}^{b_{i}} \sqrt{\log
\bigl(1 +
\mathcal{D}(\varepsilon, \mathcal{H}_i, d_N) \bigr)} \,\rd
\varepsilon
\biggr) \\
&&\hspace*{187pt}{}+ 4\log\bigl(3\sqrt{e}(b_{\rmu}/a_{\rmu})\bigr)
\frac{(b_{\rmu}/a_{\rmu})^2}{{N_{\rmp}}}
\biggr]^{1/2}.
\end{eqnarray*}
This along with~\eqref{eqn:dobrushin_onestep_bound}
and~\eqref{eqn:const_sum_bound} completes the proof of theorem.
\end{pf*}


%

\printaddresses

\end{document}